\newtheorem{theorem}{Theorem}[section]
\newtheorem{proposition}[theorem]{Proposition}
\newtheorem{lemma}[theorem] {Lemma}
\newtheorem{corollary}[theorem]{Corollary}
\newtheorem{example}[theorem]{Example}
\newtheorem{remark}[theorem]{Remark}
\newtheorem{definition}[theorem]{Definition}
\def\cprime{$'$}
\def\Im{\mathop{\mathrm{Im}}\nolimits}
\def\Hom{\mathop{\mathrm{Hom}}\nolimits}
\def\Ext{\mathop{\mathrm{Ext}}\nolimits}
\def\Ker{\mathop{\mathrm{Ker}}\nolimits}
\def\dim{\mathop{\mathrm{dim}_k}\nolimits}
\def\HH {\mathop{\mathrm{HH}}\nolimits}
\def \car {\mathop{\mathrm{char}} \nolimits}
\def\id{\mathop{\mathrm{id}}\nolimits}
\def \Z{{\mathbb Z}}
\def\R{\mathcal R}
\def\P{\mathcal P}
\def\C{\mathcal C}
\def\I{\mathcal I}
\def\G{\mathcal G}
\def\E{\mathcal E}
\def\nG{N \mathcal G}
\def\nE{N \mathcal E}
\begin{document}

\title{Gerstenhaber algebra structure on the Hochschild cohomology of quadratic string algebras}
\date{}
\author{Mar\'\i a Julia Redondo  and Lucrecia Rom\'an \footnote{Instituto de Matem\'atica (INMABB), 
Departamento de Matem\'atica, Universidad Nacional del Sur (UNS)-CONICET, Bah\'\i a Blanca, Argentina.
{\it E-mail address: mredondo@criba.edu.ar, lroman@uns.edu.ar}}
\thanks{The first author is a researcher and the second author has a fellowship from CONICET, Argentina. This work has been supported by the project PICT-2011-1510.  }}

\maketitle

\begin{abstract}
We describe the Gerstenhaber algebra structure on the Hochschild cohomology  $\HH^*(A)$ when $A$ is a quadratic string algebra. First we compute the Hochschild cohomology groups using Barzdell's resolution and we describe generators of these groups. Then we construct comparison morphisms between the bar resolution and Bardzell's resolution in order to get formulae for the cup product and the Lie bracket.  We find conditions on the bound quiver associated to  string algebras in order to get non-trivial structures.
\end{abstract}

\noindent 2010 MSC: 16E40; 16W99

\section{Introduction}

Let $A$ be an associative, finite dimensional algebra over an algebraically closed field $k$. By a fundamental result in representation theory  it is well known that there exists a finite quiver $Q$ such that $A$ is Morita equivalent to $kQ/I$, where $kQ$ is the path algebra of $Q$ and $I$ is an admissible two-sided ideal of $kQ$,  see for instance \cite[Theorem 3.7]{ASS}. The pair $(Q,I)$ is called a bound quiver of $A$.

A finite dimensional algebra is called {\bf biserial} if the radical of every projective indecomposable module is the sum of two uniserial modules whose intersection is simple or zero, see~\cite{F}.
These algebras have been studied by several authors and from different points of view since there are a lot of natural examples of algebras which turn out to be of this kind:  Nakayama algebras~\cite{N}, Kawada algebras~\cite{K,R1}, blocks of group algebras with cyclic or dihedral defect groups~\cite{J,E,R0}, iterated tilted algebras  of type $A$ and $\tilde{A}$~\cite{AH, AS}, Brauer graph algebras~\cite{SS} and algebras appearing in the Gel{\cprime}fand-Ponomarev classification of Harish-Chandra modules over the Lorentz group~\cite{GP}.

The representation theory of these algebras was first studied by Gel{\cprime}fand and Ponomarev in~\cite{GP}: they have provided the methods in order to classify all their indecomposable representations. This classification shows that biserial algebras are always tame, see also~\cite{CB}. They are an important class of algebras whose representation theory has been very well described, see~\cite{AS,BR}.

The subclass of {\bf special biserial algebras} was studied by Skowro\'nski and Waschb{\"u}sch in~\cite{SW} where they  characterize
the biserial algebras of finite representation type. The definition of these algebras can be given in terms of conditions on the bound quiver $(Q,I)$ associated (see Section \ref{definiciones}). A classification of the special biserial algebras which are minimal representation-infinite has been given by Ringel in~\cite{R}.   This class of algebras has played an important r\^{o}le in the study of self-injective algebras, and their representation theory is well understood. There is a beautiful description of all  finite-dimensional indecomposable modules
over special  biserial  algebras:  they  are either  string modules  or
band modules or non-uniserial projective-injective modules,  see~\cite{BR, WW}.

Since $A$ is an algebra over a field $k$, the {\bf Hochschild cohomology groups} ${\HH}^n(A,M)$ with coefficients in an $A$-bimodule $M$ can be identified with the groups $\Ext_{A-A}^n(A,M)$.  In particular, if $M$ is the $A$-bimodule $A$, we simply write ${\HH}^n(A)$.  Even though the computation of the Hochschild cohomology groups ${\HH}^n(A)$ is rather complicated, some approaches have been successful when the algebra $A$ is
given by a quiver with relations. For instance, explicit formula for the dimensions of ${\HH}^n(A)$ in terms of
those combinatorial data have been found in~\cite{blm,c1,c2,crs,H, Re}. In particular, Hochschild cohomology of special biserial algebras has been considered in~\cite{Bu,L,ST}.

In the particular case of monomial algebras, that is, algebras $A=kQ/I$ where $I$ can be chosen as generated by paths, one has a detailed description of a minimal resolution of the $A$-bimodule $A$:  Bardzell's resolution~\cite{B}. 

An algebra is called a  {\bf string algebra} if it is Morita equivalent to a monomial special biserial algebra $kQ/I$, and it is called {\bf quadratic} if the ideal $I$ is generated by paths of length two. In general, the computation of the Hochschild cohomology groups using Bardzell's resolution may lead to hard combinatorial computations.  However, for quadratic string algebras the resolution, and the complex associated, are easier to handle. 

The sum ${\HH}^*(A) = \bigoplus_{n \geq 0} {\HH}^n(A)$ is a Gerstenhaber algebra, that is, it is a graded commutative ring via the cup product,  a graded Lie algebra via the bracket, and these two structures are related, see~\cite{G1}.
So far there are only a few classes of algebras in the literature where the Gerstenhaber algebra structure on Hochschild cohomology has been determined explicitly, see \cite{Bu, LZ, LZ2, SF, ST, SC, SWa, SA, WZ}. In~\cite{RR} we  compute the Hochschild cohomology groups of a triangular string algebra, and we prove that its ring structure is trivial.

The purpose of this paper is to study the Gerstenhaber algebra structure on the Hochschild cohomology of a quadratic string algebra $A$, that is,  we describe explicitly the  ring and the Lie algebra structure of the Hochschild cohomology of $A$, and we delete the triangular condition in order to get non-trivial structures.
The main results show the dimension of the Hochschild cohomology groups $\HH^n(A)$, see Theorems \ref{0-1} and \ref{n>1}.  These computations are done using Bardzell's resolution, and allow us to give  an  explicit  basis  for  each cohomology  group.  Furthermore,  we construct a comparison morphism between the bar resolution and Bardzell's resolution which lead us to the formulae for the cup product and the Lie bracket.

Finally we describe explicit non-zero elements in the Hochschild cohomology whose cup product and Lie bracket are non-zero, that is, we define $\G_n$ to be the set of {\bf gentle pairs} (see Definition \ref{defpargentil}(a)) and we prove the following theorems.

\medskip

{\bf Theorem} (Theorem \ref{cup}) {\it 
Let $A = kQ/I$ be a quadratic string algebra
and $\G_n \neq \emptyset$ for some  $n > 0$. Then the cup product
defined in $\HH^{*}(A)$ is non-trivial.  More
precisely,
\begin{itemize}
\item [(i)] if $n$ is even and $\car k \not =2$,  $\HH^{s_1n}(A) \cup \HH^{s_2n} (A) \not = 0$;
\item [(ii)] if $n$ is odd and $\car k \not =2$, $\HH^{2s_1n}(A) \cup \HH^{2s_2n} (A) \not = 0$;
\item [(iii)] if $\car k  =2$,  $\HH^{s_1n}(A) \cup \HH^{s_2n} (A) \not = 0$
\end{itemize}
for any $s_1, s_2 \geq 1$.}

\bigskip

{\bf Theorem} (Theorem \ref{lie}) {\it 
Let $\car k =0$ and let $A = kQ/I$ be a quadratic string algebra such that $\G_n
\neq \emptyset$ for some  $n > 0$. Then the Lie bracket defined
in $\HH^{*}(A)$ is non-trivial. More
precisely,
\begin{itemize}
\item [(i)] if $n$ is even,  $[\HH^{s_1n+1}(A) , \HH^{s_2n+1} (A)] \not = 0$;
\item [(ii)] if $n$ is odd,  $[\HH^{2s_1n+1}(A) , \HH^{2s_2n+1} (A)] \not = 0$
\end{itemize}
for any $s_1, s_2 \geq 1$, $s_1 \not = s_2$.}

\bigskip

These proofs are given in terms of representatives $u_{sn}$ and $u_{sn+1}$ of non-zero elements in $\HH^{sn}(A)$ and $\HH^{sn+1}(A)$ respectively such that 
$$u_{s_1n} \cup u_{s_2n} = u_{(s_1 + s_2)n} \quad \mbox{and} \quad [ u_{s_1n+1} , u_{s_2n+1}] = \lambda u_{(s_1+s_2)n+1}$$ for some scalar $\lambda \in k$.

The paper is organized as follows.  In Section 2 we introduce all the necessary terminology.  In Section 3 we recall the resolution given by Bardzell for monomial algebras in~\cite{B} and we present all the computations that lead us to Theorems \ref{0-1} and \ref{n>1} where we present the dimension of all the Hochschild cohomology groups of quadratic string algebras. In Section 4 we describe the ring and the Lie structure of the Hochschild cohomology of these algebras and we find conditions on the bound quiver associated in order to get non-trivial structures.

\section {Preliminaries}

\subsection{Quivers and relations}

Let $Q$ be a connected finite quiver with a set of vertices $Q_0$, a set of arrows $Q_1$ and $s, t : Q_1 \to Q_0$ be the maps associating to each arrow $\alpha$ its source  $s(\alpha)$ and its target $t(\alpha)$.  A path $w$ of {\bf length} $l$ is a sequence of $l$ arrows $\alpha_1 \dots \alpha_l$ such that $t(\alpha_i)=s(\alpha_{i+1})$. We denote by $\vert w \vert $ the length of the path $w$. We put $s(w)=s(\alpha_1)$ and $t(w)=t(\alpha_l)$. For any vertex $x$ we consider $e_x$ the trivial path of length zero and we put $s(e_x)=t(e_x)=x$.  An oriented cycle is a non-trivial path $w$ such that $s(w)=t(w)$.

The {\bf path algebra} $kQ$ is the $k$-vector space with basis the set of paths in $Q$; the product on the basis elements is given by the concatenation of the sequences of arrows of the paths $w$ and $w'$ if they form a path (namely, if $t(w)=s(w')$) and zero otherwise.  Vertices form a complete set of orthogonal idempotents of $kQ$. Let $F$ be the two-sided ideal of $kQ$ generated by the arrows of $Q$. A two-sided ideal $I$  of $kQ$ is said to be {\bf admissible} if there exists an integer $m \geq 2$ such that $F^m \subseteq I \subseteq F^2$.  The elements in $I$ are called {\bf relations}, $kQ/I$ is called a {\bf monomial algebra} if the ideal $I$ is generated by paths, and a relation is called {\bf quadratic} if it is a path of length two.

By a fundamental result in representation theory  it  is well known that if $A$ is a basic, indecomposable, finite dimensional algebra over an algebraically closed field $k$, then there exists a unique finite  connected quiver $Q$ and a surjective morphism of $k$-algebras $\nu: kQ \to A$, which is not unique in general, with $I_\nu=\Ker \nu$ admissible,    see for instance \cite[Theorem 3.7]{ASS}.  The pair $(Q,I_\nu)$ is called a {\bf presentation} of $A$.  

\subsection{String algebras}\label{definiciones}

Recall from~\cite{SW} that a bound quiver $(Q,I)$ is  {\bf special biserial} if it satisfies the following conditions:
\begin{itemize}
\item [S1)] Each vertex in $Q$ is the source of at most two arrows and the target of at most two arrows;
\item [S2)] For an arrow $\alpha$ in $Q$ there is at most one arrow $\beta$ and at most one arrow $\gamma$ such that $\alpha\beta \not \in I$ and $ \gamma \alpha \not \in I$.
\end{itemize}
If the ideal $I$ is generated by paths, the bound quiver $(Q,I)$ is  {\bf string}.

An algebra is called {\bf special biserial} (or {\bf string}) if it
is Morita equivalent to a path algebra $kQ/I$ with $(Q,I)$ a special
biserial bound quiver (or a string bound quiver, respectively).

Since Hochschild cohomology is invariant under Morita equivalence,
whenever we deal with a string algebra $A$ we will assume that it is
given by a string presentation $A = kQ/I$  with $(Q,I)$ satisfying the
previous conditions.

In this paper we are interested in quadratic string algebras, that is, the ideal $I$ is generated by paths of length two.  We fix a minimal set $\R$ of paths that generates the ideal $I$.  Moreover, we denote by $\P$ the set of paths in 
$Q$ such that the set $\{ \gamma + I, \gamma \in \P\} $ is a basis of $A=kQ/I$. It is clear that $Q_0 \cup Q_1 \subseteq \P$ since $I \subseteq F^2$.

\section{Hochschild cohomology groups}
\subsection{Bardzell's resolution for quadratic string algebras} 

We recall that the Hochschild cohomology groups ${\HH}^n(A)$ of an algebra $A$ are the groups $\Ext_{A-A}^n(A,A)$, $n \geq 0$.  Since string algebras are monomial algebras, their Hochschild cohomology groups can be computed using a convenient minimal projective resolution of $A$ as $A$-bimodule constructed by Bardzell in~\cite[Theorem 4.1]{B}. In the particular case of quadratic string algebras, this minimal resolution is the following:
$$ \dots \longrightarrow A \otimes k AP_n \otimes A \stackrel{d_{n}}{\longrightarrow} A \otimes k AP_{n-1} \otimes A \longrightarrow \dots \longrightarrow  A \otimes k AP_0 \otimes A \stackrel{\mu}{\longrightarrow} A \longrightarrow 0$$
where $kAP_0=kQ_0$, $kAP_1=kQ_1$, for $n \geq 2$, $kAP_{n}$ is the vector space generated by the set
$$AP_n = \{ \alpha_1 \alpha_2 \cdots \alpha_n :  \alpha_i \alpha_{i+1} \in I, 1 \leq i < n\},$$
all tensor products are taken over $E=kQ_0$, the subalgebra of $A$ generated by the vertices, and the $A$-bimodule morphisms are
\begin{align*}
\mu( 1 \otimes e_i \otimes 1 ) & =  e_i, \\
d_1( 1 \otimes \alpha \otimes 1) & =   \alpha \otimes e_{t(\alpha)} \otimes 1 - 1 \otimes e_{s(\alpha)} \otimes \alpha, \\
d_{n} (1 \otimes \alpha_1 \cdots \alpha_n \otimes 1) & = \alpha_1 \otimes \alpha_2 \cdots \alpha_n \otimes 1 + (-1)^n 1 \otimes \alpha_1 \cdots \alpha_{n-1} \otimes \alpha_n.
\end{align*}
The $E$-$A$-bilinear map $c: A \otimes k AP_{n-1} \otimes A \to A \otimes k AP_{n} \otimes A$ defined by
\[ c( a \otimes \alpha_1 \cdots \alpha_{n-1} \otimes 1) = \begin{cases}
b \otimes \alpha_0 \alpha_1 \cdots \alpha_{n-1}  \otimes 1 & \mbox{if $a=b\alpha_0$ and $\alpha_0 \alpha_1 \in I$,} \\
0 & \mbox{otherwise}
\end{cases}\]
is a contracting homotopy, see~\cite[Theorem 1]{S} for more details. \\

The Hochschild complex, obtained by applying $\Hom_{A-A} (-, A)$ to the resolution we have just described and using the isomorphisms
\[ \Hom_{A-A}( A \otimes k AP_n \otimes A,  A)  \simeq \Hom_{E-E}( k AP_n,  A)\]
is
\[ 0 \longrightarrow \Hom_{E-E} (k AP_0 , A)  \stackrel{F_{1}}{\longrightarrow} \Hom_{E-E}( k AP_1,  A ) \stackrel{F_{2}}{\longrightarrow}   \Hom_{E-E}(  k AP_2, A)  \to  \cdots \]
where
\begin{align*}
F_1( f) (\alpha)  & =   \alpha f(e_{t(\alpha)}) - f(e_{s(\alpha)}) \alpha,\\
F_{n}(f) (\alpha_1 \cdots \alpha_n)  & =  \alpha_1 f(\alpha_2 \cdots \alpha_n) + (-1)^n f(\alpha_1 \cdots \alpha_{n-1}) \alpha_n.
\end{align*}

\subsection{Computations}

In order to describe the ring and the Lie algebra structure of the Hochschild cohomology ${\HH}^*(A)$ of a quadratic string algebra, we need a convenient description of the previous complex.  In this section we will describe explicit basis of these $k$-vector spaces and study the behavior of the maps between them in order to get information about kernels and images.

For any pair $X,Y$ of sets of paths in $Q$ we denote by $(X// Y)$ the set of
pairs $(\rho, \gamma) \in X \times Y$ such that $\rho, \gamma$ are
parallel paths in $Q$, that is
\[ (X// Y) = \{  (\rho, \gamma) \in X \times Y: s(\rho) = s(\gamma),   t(\rho) =
t(\gamma)\}.\]
Recall that we have fixed a set $\P$ of paths in $Q$ such that the set $\{ \gamma + I: \gamma \in \P\} $ is a basis of $A=kQ/I$.  For any $m \geq 0$ we denote by $\P_m$ the set of paths in $\P$ whose length is greater than or equal to $m$. Then
$$(X// \P_m) = \{  (\rho, \gamma) \in  X \times \P  : s(\rho) = s(\gamma),   t(\rho) =
t(\gamma), \vert \gamma \vert \geq m\}$$
and hence  $(X// \P)=(X// Q_0) \sqcup (X// \P_1) = (X// Q_0) \sqcup  (X// Q_1) \sqcup (X// \P_2) $, where $\sqcup$ depicts disjoint union.
\medskip

\begin{example}\label{ejemplo1}
 Let  $A=kQ/I$  where $Q$ is the quiver 
\[ \xymatrix{
 1 \ar@/^{6mm}/[rrr]^{\alpha_1} \ar[rrr]^{\beta_1}  & & & 2\ar@/^{6mm}/[lll]_{\alpha_2}   
}\] and    $I = < \alpha_1\alpha_2, \alpha_2\alpha_1,
 \beta_1\alpha_2>$. The sets $(AP_n//\P)$ are 
 \begin{align*} (AP_0//Q_0) & =   \{ (e_1,e_1), (e_2, e_2)\}, \\
(AP_0//\P_1) & =    \{ (e_2,\alpha_2\beta_1) \} 
\end{align*}
and,  for  $i \geq 0$, 
\begin{align*}
(AP_{2i}//Q_0) & =   \{ ((\alpha_1\alpha_2)^{i}, e_1), ( (\alpha_2\alpha_1)^{i},  e_2),  (\beta_1\alpha_2(\alpha_1\alpha_2)^{i-1}, e_1) \}, \\
(AP_{2i}//\P_1) & =   \{   ((\alpha_2\alpha_1)^{i}, \alpha_2\beta_1) \}, \\
(AP_{2i+1}//Q_0) & =   \emptyset, \\
(AP_{2i+1}//\P_1) & =   \{ ((\alpha_1\alpha_2)^{i} \alpha_1,  \alpha_1),  ((\alpha_1\alpha_2)^{i} \alpha_1, \beta_1), 
 ((\alpha_2\alpha_1)^{i}\alpha_2, \alpha_2), \\
 & \qquad   (\beta_1(\alpha_2\alpha_1)^{i}, \alpha_1), (\beta_1(\alpha_2\alpha_1)^{i}, \beta_1)\}.
\end{align*}  
 \end{example}

\begin{example}\label{ejemplo2} Let  $A = kQ/I$  where $Q$ is the quiver
\[\xymatrix@=5mm{ 5 \ar[rr]_{\alpha_5} & &  6
\ar@<-3pt>[dd]_{\beta_1} \ar[rr]_{\alpha_6} & & 7
\ar[rd]_{\alpha_{7}} \\  & & & & &  1 \ar[ld]_{\alpha_{1}}
\\ 4 \ar[uu]_{\alpha_{4}} & & 3 \ar[ll]_{\alpha_{3}}
\ar@<-3pt>[uu]_{\beta_{2}} & & 2 \ar[ll]_{\alpha_{2}}}
\]
and   $I = < \alpha_i\alpha_{i+1}>_{ \{i=1,\cdots,6 \} }  + <
\beta_{1}\beta_2, \  \beta_{2}\beta_1  > $.  The sets $(AP_n//\P)$ are 
\begin{align*} 
(AP_0//Q_0) & =  \{ (e_i,e_i)\}_{ \{i=1,\cdots,7 \} }, \\
(AP_0//\P_1) & =    \emptyset, \\   
(AP_1//Q_0) & =    \emptyset, \\
(AP_1//\P_1) & =    \{ (\alpha_i, \alpha_i)\}_{ \{i=1,\cdots,7 \}} \sqcup \{(\beta_i, \beta_i)\}_{ \{i=1,2 \}},\\ 
(AP_{2i}//Q_0) & =   \{ ((\beta_1\beta_2)^i, e_6), ((\beta_2\beta_1)^i, e_3)  \} \quad \mbox{for  $i \geq 1$}, \\
(AP_{4}//\P_1) & =  \{ (\alpha_2\alpha_3\alpha_4\alpha_5, \alpha_2\beta_2), (\alpha_3\alpha_4\alpha_5\alpha_6, \beta_2\alpha_6) \},\\
(AP_{2i}//\P_1) & = \emptyset  \quad  \mbox{for $i \geq 1, i \not = 2$},\\
(AP_{7}//Q_0) & =   \{ (\alpha_1\alpha_2 \dots \alpha_6\alpha_7, e_1) \}, \\
(AP_{2i+1}//Q_0) & =  \emptyset  \quad \mbox{for $i \geq 1, i \not = 3$},\\
(AP_{3}//\P_1) & =  \{ (\alpha_3\alpha_4\alpha_5, \beta_2), (\beta_1\beta_2\beta_1, \beta_1), (\beta_2\beta_1\beta_2, \beta_2)\}, \\
(AP_{5}//\P_1) & =  \{ (\alpha_2\alpha_3\alpha_4\alpha_5\alpha_6, \alpha_2\beta_2\alpha_6), ((\beta_1\beta_2)^2\beta_1, \beta_1), ((\beta_2\beta_1)^2\beta_2, \beta_2)\}, \\
(AP_{2i+1}//\P_1) & =  \{ ((\beta_1\beta_2)^i\beta_1, \beta_1), ((\beta_2\beta_1)^i\beta_2, \beta_2)\}  \quad \mbox{for $i >2$}.
\end{align*}
\end{example}

\medskip

Observe that the  $k$-vector spaces  $\Hom_{E-E} (k AP_{n} , A)$ and $k(AP_{n}// \P)$ are isomorphic: the basis element $(\rho, \gamma) \in (AP_{n}// \P)$ corresponds to the morphism $f_{(\rho, \gamma)}$ in $\Hom_{E-E} (k AP_{n} , A)$ defined by
\[ f_{(\rho, \gamma)} (w)= \begin{cases}
\gamma & \mbox{if $w=\rho$}, \\
0 & \mbox{otherwise}.
\end{cases}\]
Taking into account these isomorphisms, for any $n \geq 0$ we can
construct commutative diagrams
{\small
\[ \xymatrix{
\Hom_{E-E} (k AP_{n} , A) \ar^\cong[dd] \ar^{F_{n+1}}[rrr] & & &  \Hom_{E-E} (k AP_{n+1} , A)  \ar^\cong[dd]  \\ \\
k(AP_{n}// Q_0) \oplus k(AP_{n}// \P_1) \ar@{-->}^{ \left ( \begin{matrix} 0 & 0 \\ F^0_{n+1}  & 0 \\ 0 & F^1_{n+1}
\end{matrix} \right ) \qquad \qquad 
}[rrr] & & &   k(AP_{n+1}// Q_0) \oplus  k(AP_{n+1}// Q_1)
\oplus k(AP_{n+1}// \P_2) }\]}
where
\begin{align*}
F_1^0 (e_r, e_r) & =   \sum_{ \{ \beta \in Q_1:  \ t(\beta)=r \} } (\beta , \beta)  - \sum_{ \{ \beta \in Q_1: \ s(\beta)=r  \} } (\beta, \beta ),     \\
F_1^1 (e_r, \gamma) & =   \sum_{ \{ \beta \in Q_1:  \ t(\beta)=r \} } (\beta , \beta \gamma)  - \sum_{ \{ \beta \in Q_1: \ s(\beta)=r  \} } (\beta, \gamma \beta ),    
\end{align*} 
\begin{align*}
F^0_{n+1} & (\alpha_1 \cdots \alpha_{n}, e_{s(\alpha_1)})  \\
& =  \sum_{\substack{ \{ \beta \in Q_1: \ \beta \alpha_1 \in I, \\ t(\beta)=s(\alpha_1) \} }} (\beta \alpha_1 \cdots \alpha_{n}, \beta)  + (-1)^{n+1} \sum_{ \substack{ \{ \beta \in Q_1: \  \alpha_{n} \beta \in I , \\ s(\beta)=s(\alpha_1)\} }} (\alpha_1 \cdots \alpha_{n} \beta, \beta ),
\\
F^1_{n+1} & (\alpha_1 \cdots \alpha_{n}, \gamma)  \\
&  =  \sum_{\substack{ \{ \beta \in Q_1:  \\ \beta \alpha_1 \in I, \ \beta \gamma \not \in I  \} }} (\beta \alpha_1 \cdots \alpha_{n}, \beta \gamma)  + (-1)^{n+1} \sum_{ \substack{ \{ \beta \in Q_1: \\ \alpha_{n} \beta \in I , \ \gamma \beta \not \in I \} }} (\alpha_1 \cdots \alpha_{n} \beta, \gamma \beta )   .
\end{align*}

Now we will introduce several subsets of $(AP_{n}// \P)$  in order to get a nice description of the kernel and the image of $F^0_{n+1}$ and $F^1_{n+1}$. \\

For $n=0$ we have that $(AP_0//\P) = (Q_0 // \P)$.  Since $A$ is finite dimensional and quadratic, if $(e_{s(\alpha_1)}, \alpha_1 \cdots  \alpha_m) \in (Q_0 // \P_1)$ then $\alpha_m \alpha_1 \in I$.

For $n=1$, $(AP_1// \P) = (Q_1//\P)$, and we consider the following partition
\[(Q_1//\P) = (1,1)_1 \sqcup (0,0)_1 \sqcup (1,0)_1 \sqcup (0,1)_1\]
defined as follows: if $(\alpha, \gamma) \in (Q_1//\P)$, we wonder if the path $\gamma$ starts or ends with the arrow $\alpha$, that is, 
\begin{align*}
(1,1)_1 &= \{ (\alpha, \alpha) : \alpha \in Q_1 \}, \\
(0,0)_1 &= \{ (\alpha, \gamma)  \in   (Q_1//\P) :  \gamma \not \in \alpha kQ \cup kQ \alpha \},\\
(1,0)_1 &= \{ (\alpha, \gamma)  \in   (Q_1//\P) :   \gamma \in \alpha kQ, \gamma \not \in kQ \alpha \},\\
(0,1)_1 &= \{ (\alpha, \gamma)  \in   (Q_1//\P) :  \gamma \not \in \alpha kQ, \gamma \in kQ \alpha  \}.
\end{align*}
For any $n \geq 2$, if $(\rho, \gamma) \in (AP_{n} // \P)$ then $\rho$ is a path of length greater than or equal to $2$, that is, $\rho= \alpha_1 \hat {\rho} \alpha_2$ and wondering if the path $\gamma$ starts with the arrow $\alpha_1$ or ends with the arrow $\alpha_2$, we get the following partition
$$ (AP_{n} // \P) = (0,0)_n \sqcup  (1,0)_n \sqcup  (0,1)_n \sqcup  (1,1)_n$$
where
\begin{align*}
(0,0)_n &= \{ (\rho, \gamma) \in (AP_{n} // \P) : \rho= \alpha_1 \hat {\rho} \alpha_2 \ \mbox{and} \ \gamma \not \in \alpha_1 kQ \cup kQ \alpha_2\}, \\
 (1,0)_n &= \{ (\rho, \gamma) \in (AP_{n} // \P) : \rho= \alpha_1 \hat {\rho} \alpha_2 \ \mbox{and} \ \gamma  \in \alpha_1 kQ, \gamma \not \in  kQ \alpha_2\} ,\\
 (0,1)_n &= \{ (\rho, \gamma) \in (AP_{n} // \P) : \rho= \alpha_1 \hat {\rho} \alpha_2 \ \mbox{and} \ \gamma \not \in \alpha_1 kQ, \gamma \in  kQ \alpha_2\}, \\
(1,1)_n &= \{ (\rho, \gamma) \in (AP_{n} // \P) : \rho= \alpha_1 \hat {\rho} \alpha_2 \ \mbox{and} \ \gamma \in  \alpha_1 kQ \cap kQ\alpha_2 \}.
\end{align*}
We also have to distinguish elements inside each of the previous sets taking into account the following definitions:
\begin{align*}
 ^+(X // \P) & =  \{ (\rho, \gamma) \in (X // \P) : Q_1 \gamma \not \subset I  \}, \\
 ^-(X // \P) & =  \{ (\rho, \gamma) \in (X // \P) : Q_1 \gamma  \subset I \}.
\end{align*}
In an analogous way we define $(X// \P)^+$,  $(X// \P)^-$,   $^+(X// \P)^+=  ^+(X// \P) \cap  (X// \P)^+$ and so on.

\begin{example}   For the algebra presented in   Example \ref{ejemplo1} we  have that, for $n>0$, 
\begin{align*}^+(0,0)^+_{n} & =   \begin{cases} (AP_{n} // Q_0) & \mbox{ if $n$ is even}, \\
\emptyset &  \hbox{ otherwise,}  \end{cases}\\ 
^+(0,0)^-_{n} & =    \begin{cases} \{ ((\alpha_1\alpha_2)^i \alpha_1, \beta_1)\} & \mbox{if $n= 2i+1$,}\\
\emptyset &  \hbox{ otherwise,}  \end{cases} \\
^-(0,0)^-_{n} & =    \begin{cases} \{ (\beta_1, \alpha_1 )\} & \mbox{ if $n= 1$},\\
\emptyset &  \hbox{ otherwise,}  \end{cases}\\
(1,0)_{n}^{-} &=  \begin{cases} 
 \emptyset, & \hbox{if $n= 1$},\\
 \{ (\beta_1(\alpha_2\alpha_1)^i, \beta_1)\} & \mbox{if $n= 2i+1$, $i \geq 1$}, \\
  \{   ((\alpha_2\alpha_1)^{i}, \alpha_2\beta_1) \}  &  \mbox{if $n= 2i$,}  \end{cases} \\
^-(0,1)_{n} & =  \begin{cases} \{  (\beta_1(\alpha_2\alpha_1)^i, \alpha_1) \} & \mbox{ if $n= 2i+1$, $i \geq 1$} ,\\ 
\emptyset &  \hbox{ otherwise}  \end{cases} \\
(1,1)_{n} &= \begin{cases}  \{ (\alpha_1, \alpha_1), (\alpha_2, \alpha_2), (\beta_1, \beta_1) \} & \hbox{if $n=1$},\\
\{((\alpha_1\alpha_2)^i \alpha_1,  \alpha_1),  ((\alpha_2\alpha_1)^i\alpha_2, \alpha_2)\} & \mbox{if $n = 2i+1$, $i \geq 1$}, \\ 
\emptyset &  \hbox{ otherwise,}  \end{cases} \\
 ^-(0,0)^+_{n}  & =  (1,0)_{n}^{+} \  = \ ^+(0,1)_{n} \ = \  \emptyset, 
\end{align*}
and in Example \ref{ejemplo2} we have that
\begin{align*} 
^+(0,0)^+_{n} & =  \begin{cases}  
(AP_{n}//Q_0) &   \mbox{ if $n$ is even}, \\
\{ (\alpha_3\alpha_4\alpha_5, \beta_2) \} &   \mbox{ if $n= 3$}, \\
 (AP_7//Q_0) &  \mbox{ if $n= 7$}, \\
\emptyset &  \hbox{ otherwise,}  \end{cases}\\ 
(1,0)^+_n &=   \begin{cases} \{(\alpha_2\dots\alpha_5, \alpha_2\beta_2)\} & \mbox{if $n= 4$}, \\
\emptyset, &  \hbox{ otherwise,}  \end{cases} \\
^+(0,1)_n &=   \begin{cases} \{(\alpha_3\dots\alpha_6, \beta_2\alpha_6)\} & \mbox{if $n= 4$}, \\
\emptyset, &  \hbox{ otherwise,}  \end{cases} \\
(1,1)_{3} & = \{ (\beta_1\beta_2) \beta_1, \beta_1), (\beta_2\beta_1\beta_2, \beta_2) \},\\
(1,1)_{n} & = (AP_n// \P_1) \qquad   \hbox{ if $n$ is odd, $n \not =3$},\\
(1,1)_{n} & =\emptyset \qquad \qquad \hbox{ if $n$ is even,}\\
^+(0,0)^-_{n} & =  ^-(0,0)^+_{n} \ = \  ^-(0,0)^-_{n} \ = \ (1,0)^-_n \ = \ ^-(0,1)_n  \ = \ \emptyset.
\end{align*} 

\end{example}

\medskip

We start by describing the behavior of our maps $F^1_{n+1}$ restricted to the subsets we have just defined. With this aim in mind, for any $n \geq 1$  we define a map
$$\phi_n: (1,0)_{n}^+ \to {^+(0,1)_{n}}$$ as follows:  given $(\rho, \gamma) \in (1,0)_{n}^+$ we have that $\rho = \alpha \rho'$, $\gamma= \alpha \gamma'$, $\rho'$ and $\gamma'$ do not share their ending arrows and there exists an arrow $\beta$ such that $\gamma' \beta \not \in I$.  Since $A$ is a string algebra we conclude that $\beta$ is unique and that $(\rho' \beta, \gamma' \beta) \in {^+(0,1)_{n}}$.  Hence we define $\phi_n (\alpha \rho', \alpha \gamma') = (\rho' \beta, \gamma' \beta)$.

The proof of the following lemma is analogous to the proof of~\cite[Lemma 4.1]{RR} concerning triangular string algebras. 

\begin{lemma}\label{nucleo}
For any $n \geq 1$ we have
\begin{itemize}
\item[(a)] $^-(0,0)_{n}^- \sqcup (1,0)_{n}^- \sqcup \ ^-(0,1)_{n} \sqcup  (1,1)_{n} \subset \Ker F^1_{n+1}$;
\item[(b)] the function $F^1_{n+1}$ induces a bijection from $^-(0,0)_{n}^+$ to $^{-}(0,1)_{n+1} \cap
( AP_{n+1} // \P_2) $;
\item[(c)] the function $F^1_{n+1}$ induces a bijection from $^+(0,0)_{n}^-$ to $(1,0)^{-}_{n+1} \cap
( AP_{n+1} // \P_2)$;
\item[(d)] the maps $\phi_n: (1,0)_{n}^+ \to {^+(0,1)_{n}}$ are bijections and 
\[ \begin{aligned}
(id +(-1)^{n} \phi_{n}) ((1,0)_{n}^+) & \subset \Ker F^1_{n+1}, \\
(-1)^{n+1} F^1_{n+1} ((1,0)_{n}^+) & = (1,1)_{n+1} \cap ( AP_{n+1} // \P_2), \\
F^1_{n+1} (^+(0,0)^+_{n} \cap ( AP_n // \P_1     ) )  & =  (id  +
(-1)^{n+1} \phi_{n+1}) ((1,0)^+_{n+1})  \cap k( AP_{n+1} // \P_2).
\end{aligned} \]
\end{itemize}
\end{lemma}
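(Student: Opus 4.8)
The plan is to evaluate $F^1_{n+1}$ on each basis element $(\rho,\gamma)$ directly, writing $\rho = \alpha_1\cdots\alpha_n$ with first arrow $\alpha_1$ and last arrow $\alpha_n$, and to reduce every assertion to two structural facts about quadratic string algebras. The first is that, $A$ being monomial and quadratic, a path lies in $I$ exactly when it contains a length-two subpath in $\R$; hence for $\gamma\in\P$ and an arrow $\beta$ one has $\gamma\beta\notin I$ (resp.\ $\beta\gamma\notin I$) if and only if the last (resp.\ first) arrow of $\gamma$ composes with $\beta$ outside $I$. The second is the pair of consequences of S1) and S2): a \emph{uniqueness} statement, that each arrow has at most one arrow that may be appended (resp.\ prepended) to it while staying outside $I$; and an \emph{automatic relation} statement, that if two distinct arrows share a target and one composes with $\beta$ outside $I$, then the other composes with $\beta$ inside $I$ (and dually for a shared source). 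I write $\beta$ for the unique arrow with $\gamma\beta\notin I$ whenever $\gamma Q_1\not\subset I$, and $\eta$ for the unique arrow with $\eta\gamma\notin I$ whenever $Q_1\gamma\not\subset I$. As indicated, the argument runs parallel to that of \cite{RR}*{Lemma 4.1}, the presence of oriented cycles being absorbed uniformly by these two facts.

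For (a) I would show that both sums defining $F^1_{n+1}(\rho,\gamma)$ are empty. The first sum dies whenever $\gamma$ begins with $\alpha_1$, since then $\beta\alpha_1\in I$ forces $\beta\gamma\in I$, or whenever the left superscript is $^-$, since then $Q_1\gamma\subset I$; dually the second sum dies whenever $\gamma$ ends with $\alpha_n$, since $\alpha_n\beta\in I$ forces $\gamma\beta\in I$, or the right superscript is $^-$, since then $\gamma Q_1\subset I$. Matching the four listed types against this dichotomy ($^-(0,0)_n^-$ by both superscripts, $(1,0)_n^-$ by the initial arrow and the right $^-$, $^-(0,1)_n$ by the left $^-$ and the terminal arrow, and $(1,1)_n$ by both boundary arrows) yields (a).

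For (b) and (c) exactly one sum survives, the other being annihilated as in (a), and by uniqueness the surviving sum is a single term. The automatic relation fact then supplies the missing admissibility condition for free: for $(\rho,\gamma)\in{}^-(0,0)_n^+$ the element $\gamma$ does not end with $\alpha_n$, so the unique $\beta$ with $\gamma\beta\notin I$ satisfies $\alpha_n\beta\in I$, whence $(\rho\beta,\gamma\beta)\in(AP_{n+1}//\P)$ and $F^1_{n+1}(\rho,\gamma)=(-1)^{n+1}(\rho\beta,\gamma\beta)$. Transporting the type and the $^-$ superscript along $\beta$ by the first fact shows the image lands in $^-(0,1)_{n+1}\cap(AP_{n+1}//\P_2)$, and stripping off the terminal $\beta$ is a two-sided inverse; this gives the bijection in (b). Part (c) is the mirror image, prepending $\eta$ instead.

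Part (d) is the crux. I would first check that $\phi_m$ is well defined and bijective by the same append-and-strip argument (the automatic relation fact again guaranteeing $\alpha_n\beta\in I$, the type and superscript bookkeeping following from the first fact). For the first identity I would compute $F^1_{n+1}$ on $(\rho,\gamma)$ and on $\phi_n(\rho,\gamma)$ separately: on $(1,0)_n^+$ only the second sum of $F^1_{n+1}(\rho,\gamma)$ survives, equal to $(-1)^{n+1}(\rho\beta,\gamma\beta)$; on $\phi_n(\rho,\gamma)\in{}^+(0,1)_n$ only the first sum survives, its prepended arrow forced to be $\alpha_1$ because $\alpha_1\hat\gamma\beta=\gamma\beta\notin I$ while $\alpha_1\alpha_2\in I$, producing exactly $(\rho\beta,\gamma\beta)$, whereas the second sum is empty since appending any $\beta'$ with $\beta\beta'\in I$ would force $\hat\gamma\beta\beta'\in I$. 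The coefficient $(-1)^n$ is chosen precisely so these two terms cancel, giving $(id+(-1)^n\phi_n)((1,0)_n^+)\subset\Ker F^1_{n+1}$. The second identity is then immediate from the formula $F^1_{n+1}(\rho,\gamma)=(-1)^{n+1}(\rho\beta,\gamma\beta)$ on $(1,0)_n^+$, together with surjectivity onto $(1,1)_{n+1}\cap(AP_{n+1}//\P_2)$ obtained by stripping the terminal arrow. For the third identity one evaluates $F^1_{n+1}$ on $^+(0,0)_n^+\cap(AP_n//\P_1)$, where now \emph{both} sums survive with one term each, yielding $(\eta\rho,\eta\gamma)+(-1)^{n+1}(\rho\beta,\gamma\beta)$; recognizing the second term as $(-1)^{n+1}\phi_{n+1}(\eta\rho,\eta\gamma)$ and observing $(\eta\rho,\eta\gamma)\in(1,0)_{n+1}^+$ identifies the image with $(id+(-1)^{n+1}\phi_{n+1})((1,0)_{n+1}^+)$, the restriction to $k(AP_{n+1}//\P_2)$ corresponding exactly to the condition $\gamma\in\P_1$ (equivalently $|\eta\gamma|\geq 2$) in the domain. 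The main obstacle I anticipate is precisely this sign and single-term bookkeeping: pinning down the unique surviving arrow in each sum, verifying the boundary relations hold automatically, and checking the cancellation in the first identity of (d), which is the real content of the lemma.
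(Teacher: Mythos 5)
Your proposal is correct and is exactly the direct verification that the paper leaves implicit: its proof of this lemma reads only ``The proof is straightforward,'' deferring to the analogous Lemma 4.1 of \cite{RR}, and that argument is precisely the case-by-case evaluation of $F^1_{n+1}$ on basis elements that you carry out. The two structural facts you isolate (membership in $I$ detected by length-two subpaths of a monomial quadratic ideal, and the uniqueness/automatic-relation consequences of condition S2), together with the single-term and sign bookkeeping in (d), are the intended content of the omitted proof.
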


\begin{proof}
 \begin{itemize}
\item[(a)] In order to check that $(\rho, \gamma) \in (AP_n // \P_1)$ belongs to $\Ker F^1_{n+1}$ we have to prove that if $ \beta_1 \rho$ or $\rho\beta_2$
belong to $AP_{n+1}$ then $\beta_1\gamma \in I$ and $\gamma \beta_2 \in I$. \\
If $(\rho, \gamma) \in {^-(0,0)_{n}^-}$  then the statement is clear because $Q_1 \gamma \subset I$  and $\gamma Q_1 \subset I$.\\
If $(\rho, \gamma) \in (1,0)_{n}^-$, namely $(\rho, \gamma) =  (\alpha\hat{\rho}, \alpha\hat{\gamma} )$ with $\gamma Q_1 \subset I$,  then $\gamma \beta_2 \in I$.  On the other hand,   $\beta_1 \rho \in AP_{n+1}$ implies that $\beta_1\alpha \in I$ and then  $\beta_1\gamma \in I$.\\ 
The proof for ${^-(0,1)_{n}}$ is analogous. \\
Finally, if $(\rho, \gamma) \in (1,1)_{n}$, this is,  $(\rho, \gamma) = (\alpha, \alpha) $ if $n = 1$ and if $n \geq 2$,  $(\rho, \gamma) = (\alpha_1\hat{\rho}\alpha_2, \alpha_1\hat{\gamma}\alpha_2) $, again $\beta_1 \rho, \rho\beta_2 \in  AP_{n+1}$ imply directly   $\beta_1\alpha, \alpha\beta_2 \in I$ in the first case, and for $n \geq 2$, $\beta_1\alpha_1, \alpha_2\beta_2 \in I$, so $\beta_1\gamma \in I$ and $\gamma \beta_2 \in I$. 
\item[(b)] Since $A$ is a string algebra if $(\rho, \gamma) \in {^-(0,0)_{n}^+}$ there exists a unique arrow $\beta$ such that $\gamma \beta \in \P$ and then $\rho \beta \in AP_{n+1}$. It is clear that $(\rho \beta, \gamma \beta) \in \!^{-}(0,1)_{n+1} \cap
( AP_{n+1} // \P_2) $  and $F^1_{n+1} ((\rho, \gamma))= (-1)^{n+1}(\rho \beta, \gamma \beta)$.
\item[(c)] Analogous to the previous one.
\item[(d)] By construction it is clear that the map $\phi_n$ is a bijection.  The first statements follow from the fact that if $(\alpha\rho', \alpha\gamma') \in (1,0)_n^+$, then $\phi_n (\alpha \rho', \alpha \gamma') = (\rho' \beta, \gamma' \beta)$,  
 $(\alpha \rho' \beta, \alpha \gamma' \beta) \in {(1,1)_{n+1}} \cap ( AP_{n+1} // \P_2)$ and 
$$F^1_{n+1}( \rho' \beta, \gamma' \beta) = (\alpha\rho'\beta, \alpha \gamma' \beta) =(-1)^{n+1} F^1_{n+1} (\alpha \rho', \alpha  \gamma').$$ 
Finally if $(\rho, \gamma ) \in {^+(0,0)^+_{n} \cap ( AP_n // \P_1 )}$ there exist unique arrows $\alpha, \beta$ such that $\alpha\gamma \beta \in \P$. So  $(\alpha\rho, \alpha \gamma) \in (1,0)^+_{n+1}  \cap ( AP_{n+1} // \P_2)$, $\phi_{n+1} (\alpha \rho, \alpha \gamma) = ( \rho \beta,\gamma \beta) \in {^+(0,1)_{n+1}}\cap ( AP_{n+1} // \P_2)$ and 
\[F_{n+1}((\rho, \gamma )) = (\alpha \rho, \alpha \gamma)+(-1)^{n+1} (\rho \beta,\gamma \beta). \] 
\end{itemize} 
\end{proof}

\medskip

\begin{proposition} \label{nuc im}
For any $n \geq 1$ we have that
\begin{align*}
\dim \Ker F^1_{n+1} & =  \vert ^-(0,0)_{n}^- \vert  +  \vert  ^-(0,1)_{n} \vert  + \vert   (1,1)_{n} \vert  +
 \vert  (1,0)_{n} \vert, \\
\dim \Im F^1_{n+1} & =   \vert \left ( ^{-}(0,1)_{n+1} \sqcup  (1,0)_{n+1} \sqcup  (1,1)_{n+1}  \right )  \cap  ( AP_{n+1} // \P_2)  \vert .
\end{align*}
\end{proposition}

\begin{proof}
It follows directly since the previous lemma implies that
the set $$^-(0,0)_{n}^- \sqcup (1,0)_{n}^- \sqcup \ ^-(0,1)_{n} \sqcup  (1,1)_{n}  \sqcup (id +(-1)^{n} \phi_{n}) ((1,0)_{n}^+)$$ is a basis of $\Ker F^1_{n+1}$,
the set $$\left  ( ^{-}(0,1)_{n+1}  \sqcup (1,0)^{-}_{n+1} \sqcup  (1,1)_{n+1} \sqcup  (id  + (-1)^{n+1} \phi_{n+1}) ( (1,0)^+_{n+1}) \right  ) \cap k ( AP_{n+1} // \P_2) $$  is a basis of $\Im F^1_{n+1}$ and
$$ (1,0)_{n} = (1,0)^+_{n} \sqcup (1,0)^-_{n}.$$ 
\end{proof}

The following result will be used in the description of the cup product defined in $\HH^*(A)$. For any cocycle $f$ in $\Ker F_{n+1}$ we denote by $\overline{f}$ its equivalence class in $\HH^n(A)$.

\begin{proposition} \label{especial} Let $n >1$ and let $$f= \sum_{i=1}^ m \lambda_i (\rho_i, \gamma_i) \in \Ker F^1_{n+1} \cap k(AP_n // \P_2)$$ such that $(\rho_i, \gamma_i) \not \in  {^-(0,0)_{n}^-}$ for all $i$ with $1 \leq i \leq m$.   Then $\overline{f}= 0$ in
$\HH^{n}(A)$.
\end{proposition}

\begin{proof}
From Lemma \ref{nucleo} we know that $f$ is a linear combination of
elements in  $$\left ( (1,0)_{n}^- \sqcup   {^-(0,1)_{n}} \sqcup
(1,1)_{n} \sqcup (id +(-1)^{n} \phi_{n})((1,0)_{n}^+) \right  ) \cap k
(AP_{n} // \P_2).$$ Now,
\begin{align*}
(1,0)_{n}^-  \cap  (AP_{n} // \P_2) & =    F^1_{n} (^+(0,0)_{n-1}^- )  \\
 {^-(0,1)_{n}} \cap (AP_{n} // \P_2) & =    F^1_{n} (^-(0,0)_{n-1}^+ ) \\
  (1,1)_{n} \cap  (AP_{n} // \P_2) & =      (-1)^{n} F^1_{n} ((1,0)_{n-1}^+) \\
(id +(-1)^{n} \phi_{n})((1,0)_{n}^+)  \cap k (AP_{n} // \P_2) & =  
F^1_{n}( ^+(0,0)^+_{n-1} \cap ( AP_{n-1} // \P_1) )
\end{align*}
and these equalities imply that $f$ belongs to the image of $F_n$. 
\end{proof}

In order to describe the behavior of $F^0_{n+1}$ in $k(AP_{n}//Q_0)$ we need a description of the basis elements of this vector space. Observe that $(AP_{n}//Q_0) \subset {^+(0,0)_{n}^+ }$. \\

\begin{definition}
A pair $(\alpha_1 \cdots \alpha_n , e_{s(\alpha_1)} ) \in (AP_{n}//Q_0)$ is called { \bf incomplete}  if $\alpha_n \alpha_1 \not  \in I$, and it is called {\bf complete} if $\alpha_n \alpha_1 \in I$.
We denote by $\I _n$ and by $\C_n$ the set of incomplete and complete pairs in $(AP_{n}//Q_0)$, respectively.
\end{definition}
The cyclic group $\Z_n = <\tau>$ of order $n$ acts on $\C_n$, with the action given by
$$ \tau (\alpha_1 \cdots \alpha_n, e_{s(\alpha_1)}) = (\alpha_n \alpha_1 \cdots \alpha_{n-1}, e_{s(\alpha_n)}).$$

For any $(\rho, e) \in \C_n$ we define its {\bf order} as the first natural number  $r$ such that $\tau^r (\rho, e) = (\rho, e)$, and consider the norm of this element defined as follows
$$N( \rho, e) = \sum_{i=0}^{r-1} \tau^i (\rho, e).$$
Clearly $\tau$ and $N$ induce linear maps $\tau,N : k \C_n \to k\C_n$.
We consider the following subset of complete pairs
\begin{multline*}
\C_n(0) =  \{ (\alpha_1 \cdots \alpha_n , e_{s(\alpha_1)}) \in \C_n : \\
 \mbox{ $\nexists \gamma \in Q_1 \setminus \{\alpha_n \}$ and $\nexists \beta \in Q_1 \setminus \{\alpha_1 \}$ with
$\alpha_n \beta, \gamma \alpha_1  \in I$} \}.
\end{multline*}

\begin{definition}\label{defpargentil}
\begin{itemize}
\item [ ]
\item [(a)] A complete pair $(\alpha_1 \cdots \alpha_n , e_{s(\alpha_1)}) $ is called  { \bf gentle } if $ \tau^m(\alpha_1 \cdots \alpha_n, e_{s(\alpha_1)}) \in \C_n (0)$ for any $m \in \Z$;
\item [(b)] An incomplete pair
$(\alpha_1 \cdots \alpha_n , e_{s(\alpha_1)}) $  is called {\bf empty} if there is no relation $\beta \gamma \in I$ with $t(\beta) = s(\alpha_1) = s(\gamma)$.
\end{itemize}
We denote $\G _n$ and $\E_n$ the set of gentle and empty pairs in $(AP_{n}//Q_0)$ respectively, and $\nG_n, \nE_n$ their corresponding complements in $\C_n$ and $\I_n$ respectively, that is
$$(AP_{n}//Q_0) = \C_n \sqcup \I_n, \qquad \C_n = \G_n \sqcup \nG_n, \qquad \I_n = \E_n \sqcup \nE_n.$$
\end{definition}

 \begin{example}  For the algebra presented in Example \ref{ejemplo1} we have that, for any $n>0$,
\begin{align*}
\nG_n   & =   
\begin{cases} 
\{(\alpha_1\alpha_2)^i, e_1), (\alpha_2\alpha_1)^i, e_2)\} & \mbox{ if $n = 2i$, }\\
\emptyset & \mbox{otherwise,}\end{cases} \\
 \G_n &= \emptyset, \\
\nE_n &=  
\begin{cases}
 \{\beta_1(\alpha_2\alpha_1)^{i-1}\alpha_2, e_1)\} & \mbox{ if $n = 2i$,}\\
\emptyset & \mbox{otherwise,}\end{cases} \\
 \E_n &= \emptyset,\end{align*}
and in   Example \ref{ejemplo2} we have that
\begin{align*} \G_n & =    \begin{cases} (AP_n//Q_0) & \mbox{ if $n = 2i$, }\\
\emptyset, & \mbox{otherwise,}\end{cases} \\
\nG_n & = \emptyset, \\
\E_n & =   \begin{cases} \{(\alpha_1\dots \alpha_7, e_1) \}& \mbox{ if $n=7$,}\\
\emptyset & \mbox{otherwise,}\end{cases} \\
  \nE_n & = \emptyset. \end{align*}

\end{example}

\medskip

In order to describe the map $F^0_{n+1} : k (AP_n // Q_0) \to k(AP_{n+1} // Q_1)$ it is enough to study its behavior in each direct summand $k \G_n, k\nG_n, k\E_n$ and $k\nE_n$.

Observe that any element in the set $(1,1)_{n+1} \cap (AP_{n+1} // Q_1)$ is of the form
$$(\alpha_1 \alpha_2 \cdots \alpha_n \alpha_1, \alpha_1)$$
where $(\alpha_1 \alpha_2 \cdots \alpha_n, e_{s( \alpha_1)})$ is a complete pair in $\C_n$, belonging to either $\G_n$ or $\nG_n$. Hence we can decompose  $(1,1)_{n+1} \cap (AP_{n+1} // Q_1)$ in the disjoint union
$$(1,1)_{n+1}^\G \sqcup (1,1)_{n+1}^{\nG}.$$

\begin{lemma} \label{exacto}
The sequence
\[ \xymatrix{  k \G_{n} \ar[r]^N &  k \G_{n} \ar[r]^{(1-\tau)}  &   k \G_{n} \ar[r]^N &   k \G_{n}
}\] is exact.
\end{lemma}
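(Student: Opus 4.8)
The plan is to recognize the displayed sequence as the standard $2$-periodic complex attached to the cyclic group $\Z_n = \langle t \rangle$ acting on $k\G_n$, and to prove exactness orbit by orbit. First I would record that the two composites vanish. Since the norm $N(\rho,e) = \sum_{i=0}^{k-1} t^i(\rho,e)$ is the sum over the $t$-orbit of $(\rho,e)$ (where $k$ is its order), one checks on each basis element that $t\,N(\rho,e) = N(\rho,e)$ and $N\,t(\rho,e) = N(\rho,e)$; hence $(1-t)N = 0$ and $N(1-t) = 0$, so the sequence is indeed a complex.

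Next I would verify that the maps restrict correctly and that the whole situation decomposes. Because $(\rho,e)$ gentle forces every $t^m(\rho,e)$ to lie in $\C_n(0)$, the set $\G_n$ is $t$-stable, so $t$ and $N$ both map $k\G_n$ into $k\G_n$ and preserve each $\Z_n$-orbit. Writing $\G_n$ as a disjoint union of orbits $O_1,\dots,O_s$, the complex splits as the direct sum of the subcomplexes supported on the $kO_j$; thus it suffices to prove exactness on a single orbit.

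So fix an orbit $O = \{v, tv, \dots, t^{k-1}v\}$ of size $k$ and work on the $k$-dimensional space $kO$. The computation is pure linear algebra: $N$ sends every basis vector to the orbit sum $\sigma = \sum_{i=0}^{k-1} t^i v$, so $\Im N = \langle \sigma \rangle$ is one-dimensional and $\Ker N = \{ \sum c_i\, t^i v : \sum_i c_i = 0 \}$ has dimension $k-1$; on the other hand a vector is $t$-fixed precisely when all its coordinates coincide, so $\Ker(1-t) = \langle \sigma \rangle$ and $\dim \Im(1-t) = k - 1$. Combining these with the inclusions $\Im N \subseteq \Ker(1-t)$ and $\Im(1-t) \subseteq \Ker N$ coming from the complex, equality of dimensions forces $\Im N = \Ker(1-t)$ and $\Im(1-t) = \Ker N$. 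Summing over the orbits $O_1, \dots, O_s$ then yields exactness of the original sequence.

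The argument is essentially the well-known computation of the Tate cohomology of a cyclic group, and the only point requiring care is that the norm here is taken over each individual orbit rather than over all of $\Z_n$; this is exactly what makes each orbit contribute an exact piece irrespective of its size and irrespective of $\car k$, so no hypothesis on the characteristic of $k$ nor on freeness of the action is needed. The main thing to get right is therefore the bookkeeping that $\G_n$ is a union of orbits, so that the decomposition into one-orbit subcomplexes is legitimate and the per-orbit dimension count assembles correctly.
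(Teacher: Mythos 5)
Your proof is correct, and it rests on the same structural facts as the paper's own proof: that $\G_n$ is a disjoint union of $t$-orbits, and that $N$ sends every element of an orbit to the orbit sum, whence $Nt = tN = N$ and the sequence is a complex. The difference lies in how exactness is then verified. The paper argues constructively: expanding an arbitrary $x \in k\G_n$ in the basis grouped by orbit representatives, it shows that $N(x)=0$ forces the coefficient sum along each orbit to vanish, and then rewrites $x$ explicitly as $\sum_i \sum_{j\geq 1} \lambda_{ij}(t^j-1)(\rho_i,e_{s_i}) \in \Im(1-t)$; dually, $(1-t)x=0$ forces the coefficients to be constant along orbits, so $x$ is visibly a combination of norms. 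You instead split the complex into a direct sum of one-orbit subcomplexes and finish with a dimension count on an orbit of size $m$: $\dim \Ker (1-t) = 1 = \dim \Im N$ and $\dim \Ker N = m-1 = \dim \Im (1-t)$, which combined with the inclusions $\Im N \subseteq \Ker(1-t)$ and $\Im(1-t) \subseteq \Ker N$ coming from the complex property gives equality. Both arguments are sound; yours produces no explicit preimages but is more conceptual, since it identifies each orbit piece with the regular representation $k[\Z_m]$ and reduces the lemma to the standard vanishing of Tate cohomology of a cyclic group on a free module, which also makes it transparent why no hypothesis on $\car k$ is needed --- a point you correctly attribute to the norm being taken over each individual orbit rather than over all of $\Z_n$. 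The paper's explicit computation, on the other hand, hands you concrete representatives, which is mildly convenient where the lemma is re-used (e.g.\ in Lemma \ref{cero} and in the non-triviality theorems, where elements of $\Ker(1-t)=\Im N$ are manipulated directly).
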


\begin{proof}
It is clear that $ N (1-\tau) = (1-\tau) N = 0$.  Any $x \in  k \G_{n}$ can be written as follows
$$x = \sum_i \sum_{j=0}^{m_i-1} \lambda_{ij} \tau^j (\rho_i, e_{s_i}),$$
with $m_i$ the order of $(\rho_i, e_{s_i})$,  $\lambda_{ij} \in k$ and  $ (\rho_k, e_{s_k}) \not =  \tau^j (\rho_i, e_{s_i})$ if $k \not = i$. Now, $N(x) =0$ implies that
$$0 =  \sum_{i, j } \lambda_{ij} N  \tau^j (\rho_i, e_{s_i}) =  \sum_{i, j} \lambda_{ij}N (\rho_i, e_{s_i})$$
because $N \tau = N$. This implies that $\sum_{j =0}^{m_i-1}  \lambda_{ij} = 0$, and hence
$$x = \sum_{i, j } \lambda_{ij} \tau^j (\rho_i, e_{s_i}) - \sum_i (\sum_{j =0}^{m_i-1} \lambda_{ij} ) (\rho_i, e_{s_i}) = \sum_{i} \sum_{j =1}^{m_i-1}  \lambda_{ij} (\tau^j -1)  (\rho_i, e_{s_i}) \in \Im (1-\tau).$$
On the other hand, if $(1-\tau) x = 0 $ then
$$   \sum_{i} \sum_{j =0}^{m_i-1}   \lambda_{ij} \tau^j (\rho_i, e_{s_i}) =    \sum_{i} \sum_{j =1}^{m_i} \lambda_{i(j-1)} \tau^{j} (\rho_i, e_{s_i})$$
so $\lambda_{i0}= \lambda_{ij}$ for any $j$, and hence
$$x =  \sum_{i} \sum_{j =0}^{m_i-1}   \lambda_{i0} \tau^j (\rho_i, e_{s_i}) = \sum_i  \lambda_{i0} N (\rho_i, e_{s_i}) \in \Im N.$$ 
\end{proof}

\begin{lemma} \label{cero}
If $n \geq 1$ then $F^0_{n+1} = G^1_{n+1} \oplus G^2_{n+1} \oplus G^3_{n+1}$ with
\begin{align*}
G^1_{n+1} & :   k \E_n \to 0 ,\\
G^2_{n+1} & :  k \G_n \to k(1,1)_{n+1}^\G ,\\
G^3_{n+1} & :  k ( \nE_n \sqcup   \nG_n) \to k(1,1)_{n+1}^{\nG} \oplus k \left(  ( (1,0)_{n+1} \sqcup  (0,1)_{n+1} ) \cap (AP_{n+1} // Q_1)   \right ).
\end{align*} 
Moreover, $G^3_{n+1}$ is injective, $\Ker G^2_{2m+1} = k \G_{2m}/\Im (1-\tau)$ and $$\Ker G^2_{2m}= 
\begin{cases} 
 0 \qquad & \mbox{ if $\car k \not = 2$},\\
k \G_{2m-1}/\Im (1-\tau) &\mbox{ if $\car k  = 2$.} \end{cases}$$
\end{lemma}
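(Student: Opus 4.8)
The plan is to evaluate $F^0_{n+1}$ explicitly on each of the four families $\E_n,\G_n,\nG_n,\nE_n$ whose disjoint union is the basis of $k(AP_n//Q_0)$, and to record in which summand of the codomain each image lands. Fix a pair $(\rho,e)=(\alpha_1\cdots\alpha_n,e_r)$; by the formula for $F^0_{n+1}$ the first sum contributes terms $(\beta\alpha_1\cdots\alpha_n,\beta)$ and the second terms $(\alpha_1\cdots\alpha_n\beta,\beta)$. A first-sum term is a $(1,1)_{n+1}$-element exactly when $\beta=\alpha_n$ (which forces $\alpha_n\alpha_1\in I$) and is a $(1,0)_{n+1}$-element otherwise; dually a second-sum term is a $(1,1)_{n+1}$-element exactly when $\beta=\alpha_1$ and a $(0,1)_{n+1}$-element otherwise, so in particular no $(0,0)_{n+1}$-element ever occurs. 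Writing $T_{(\rho,e)}=(\alpha_1\cdots\alpha_n\alpha_1,\alpha_1)$ for the $(1,1)$-element attached to a complete pair, an empty pair gives image $0$ (both sums void), a gentle pair gives $T_{t(\rho,e)}+(-1)^{n+1}T_{(\rho,e)}\in k(1,1)^\G_{n+1}$ (the condition $\C_n(0)$ at every rotation kills all other $\beta$, and both $(\rho,e)$ and $t(\rho,e)$ are gentle since gentleness is orbit-invariant), and the remaining pairs land in $k(1,1)^{\nG}_{n+1}\oplus k\big(((1,0)_{n+1}\sqcup(0,1)_{n+1})\cap(AP_{n+1}//Q_1)\big)$. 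As these three target subspaces are spanned by disjoint sets of basis vectors, $F^0_{n+1}$ splits as the asserted direct sum $G^1_{n+1}\oplus G^2_{n+1}\oplus G^3_{n+1}$.

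For $G^2_{n+1}$ I would use the bijection $\G_n\to(1,1)^\G_{n+1}$, $(\rho,e)\mapsto T_{(\rho,e)}$, to identify the target with $k\G_n$; under it $G^2_{n+1}$ becomes the operator $(\rho,e)\mapsto t(\rho,e)+(-1)^{n+1}(\rho,e)$. When $n+1=2m+1$ this is $\pm(1-t)$, so the exactness in Lemma \ref{exacto} gives $\Ker G^2_{2m+1}=\Ker(1-t)=\Im N\cong k\G_{2m}/\Im(1-t)$. When $n+1=2m$ it is $1+t$; the orbit sizes divide $n=2m-1$ and are therefore odd, and a single-orbit computation shows $x+tx=0$ forces $2c_0=0$, whence $\Ker G^2_{2m}=0$ if $\car k\neq 2$, while if $\car k=2$ then $1+t=1-t$ and $\Ker G^2_{2m}=\Im N\cong k\G_{2m-1}/\Im(1-t)$.

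The main obstacle is the injectivity of $G^3_{n+1}$, because on the $(1,1)^{\nG}$-part alone the map is the same operator $t+(-1)^{n+1}$ that has a nontrivial kernel; injectivity must therefore exploit the $(1,0)/(0,1)$-part. The decisive observation is that each $(1,0)$- or $(0,1)$-target is produced by a \emph{unique} source pair (recovered by deleting the outer arrow), with $(1,0)$-targets arising only from first sums and $(0,1)$-targets only from second sums. Hence for $x\in\Ker G^3_{n+1}$ the coefficient in $G^3_{n+1}(x)$ of any such target equals, up to sign, the coefficient of its source in $x$. This forces the vanishing of the coefficient of every non-empty incomplete pair — each such pair produces at least one $(1,0)/(0,1)$-term, where I would invoke the special biserial axiom S2 (given $\alpha_n\alpha_1\notin I$, a second in- or out-arrow at $r$ must complete a relation) to exclude the degenerate possibility that a non-empty incomplete pair produces none — and of every complete pair that individually fails $\C_n(0)$, since such a pair carries an extra $(1,0)/(0,1)$-term.

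After this reduction $x$ is supported on complete pairs lying in $\C_n(0)$ inside non-gentle orbits, for which $F^0_{n+1}(\rho,e)=T_{t(\rho,e)}+(-1)^{n+1}T_{(\rho,e)}$ with no further terms. Reading the coefficient of each basis vector $T_h$ of $k(1,1)^{\nG}_{n+1}$ then yields the cyclic recurrence $\lambda_{t^{-1}(h)}+(-1)^{n+1}\lambda_h=0$ around each orbit. Since a non-gentle orbit contains, by definition, at least one pair failing $\C_n(0)$, whose coefficient has already been shown to vanish, propagating this recurrence around the cycle forces every coefficient in the orbit to vanish. Therefore $x=0$ and $G^3_{n+1}$ is injective.
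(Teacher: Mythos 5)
Your proof is correct and establishes every assertion of the lemma, but at the two points where genuine work is required it departs from the paper's argument. The splitting $F^0_{n+1}=G^1_{n+1}\oplus G^2_{n+1}\oplus G^3_{n+1}$ and the computation of $\Ker G^2_{2m+1}$ (transport $G^2$ along the bijection $(\rho,e)\mapsto T_{(\rho,e)}$ to the operator $\pm(1-t)$ on $k\G_n$ and invoke Lemma \ref{exacto}) are the same in both proofs. For $\Ker G^2_{2m}$ with $\car k\neq 2$, the paper constructs the explicit two-sided inverse $T=\frac 12\sum_{i=0}^{2m-2}(-1)^it^i$, proving $G^2_{2m}$ bijective, while you argue that orbit sizes divide the odd integer $2m-1$, so the recurrence $c_j=-c_{j-1}$ around an odd cycle yields $2c_0=0$; this is more elementary and suffices for the kernel claim. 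The substantial divergence is the injectivity of $G^3_{n+1}$: the paper defines a case-by-case left inverse $T$ on $(1,1)_{n+1}^{\nG}$-, $(1,0)$- and $(0,1)$-targets using alternating orbit sums truncated at $\mu(w)$ (the distance to the nearest pair outside $\C_n(0)$) and leaves $T\circ G^3_{n+1}=\id$ as a ``direct computation''; you instead analyze supports: every $(1,0)$- or $(0,1)$-target has a unique source and arises only from the first (resp.\ second) sum, so a kernel element must vanish on $\nE_n$ --- your appeal to S2 is exactly what rules out a non-empty incomplete pair whose image is accidentally zero, a point the paper never makes explicit --- and on complete pairs outside $\C_n(0)$, after which the cyclic recurrence $\lambda_{t^{-1}(h)}+(-1)^{n+1}\lambda_h=0$ propagates the vanishing around each non-gentle orbit starting from the pair outside $\C_n(0)$ that such an orbit must contain. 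The two arguments are essentially dual: the paper's truncated alternating sums are the solved form of your recurrence anchored at the same failing pair. Yours is self-contained and makes visible where the special biserial axiom and the definition of gentleness enter; the paper's buys an explicit retraction formula at the cost of an omitted case-by-case verification.
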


\begin{proof} 
Recall that
\begin{multline*}
F^0_{n+1} (\alpha_1 \cdots \alpha_{n},  e_{s(\alpha_1)})  \\
= \sum_{\substack{ \{ \beta \in Q_1:    \beta \alpha_1 \in I,\\   t(\beta)= s(\alpha_1) \} }} (\beta \alpha_1 \cdots \alpha_{n}, \beta) + (-1)^{n+1} \sum_{ \substack{ \{ \beta \in Q_1:  \alpha_{n} \beta \in I , \\   s(\beta)= s(\alpha_1)  \} } } (\alpha_1 \cdots \alpha_{n} \beta, \beta )  \end{multline*}
so it is clear that $F^0_{n+1} (\E_n ) =0$, and if $(\alpha_1 \cdots \alpha_n , e_{s(\alpha_1)}) \in \G_n$ then
$$G^2_{n+1} (\alpha_1 \cdots \alpha_{n},  e_{s(\alpha_1)}) =  (\alpha_n \alpha_1 \cdots \alpha_{n}, \alpha_n) + (-1)^{n+1} (\alpha_1 \cdots \alpha_{n} \alpha_1, \alpha_1) \in k(1,1)_{n+1}^\G.$$
Now we shall prove that $\Ker G^2_{2m+1} = k \G_{2m}/\Im (1-\tau) $.  From the commutativity of the diagram
\[ \xymatrix{  k \G_{2m} \ar[rr]^{ G^2_{2m+1}} \ar@{=}[d] & &  k (1,1)_{2m+1}^\G  \\
 k \G_{2m} \ar[rr]_{(1-\tau)} & &  k \G_{2m} \ar[u]^{-\psi}
}\]
where $\psi  (\alpha_1 \cdots \alpha_{2m}, e_{s(\alpha_1)}) =  (\alpha_1 \cdots \alpha_{2m} \alpha_1,\alpha_1)$ is an isomorphism, we get that $$\Ker G^2_{2m+1} = \Ker (1-\tau).$$
The equalities
$$\Ker (1-\tau) = \Im N = k \G_{2m}/\Ker N = k \G_{2m}/\Im (1-\tau) $$  follow from the exactness of the sequence
\[ \xymatrix{  k \G_{2m} \ar[r]^N &  k \G_{2m} \ar[r]^{(1-\tau)}  &   k \G_{2m} \ar[r]^N &   k \G_{2m},
}\]
proved in Lemma \ref{exacto}.  If $\car k = 2$ the same proof works for $\Ker G^2_{2m}$.  
If $\car k \not = 2$,  we define $T: k(1,1)_{2m}^\G \to k \G_{2m-1}$ as follows:
$$T (\alpha_1 \cdots \alpha_{2m-1} \alpha_1, \alpha_1) = \frac 12 \sum_{i=0}^{2m-2} (-1)^i \tau^i (\alpha_1 \cdots \alpha_{2m-1}, e_{s(\alpha_1)})$$
and we get that $T \circ G^2_{2m} =  \id$ and $ G^2_{2m} \circ T =  \id$, hence $G^2_{2m}$ is bijective. 

If $(\alpha_1 \cdots \alpha_n , e_{s(\alpha_1)}) \in \nE_n$ then
\begin{multline*}
G^3_{n+1} (\alpha_1 \cdots \alpha_{n}, e_{s(\alpha_1)}) \\
=
\begin{cases}
 (\beta \alpha_1 \cdots \alpha_{n}, \beta) & \in k(1,0)_{n+1},  \\
 (-1)^{n+1} (\alpha_1 \cdots \alpha_{n} \gamma, \gamma )  & \in  k(0,1)_{n+1}, \\
 (\beta \alpha_1 \cdots \alpha_{n}, \beta) +
 (-1)^{n+1} (\alpha_1 \cdots \alpha_{n} \gamma, \gamma ) & \in k((1,0)_{n+1} \sqcup  (0,1)_{n+1})
\end{cases} \end{multline*}
depending on the existence of $\beta$ and $\gamma$ satisfying $\beta \not = \alpha_n$ and $\beta \alpha_1 \in I$,  $\gamma \not = \alpha_1$ and $\alpha_n \gamma \in I$.
Finally, if $(\alpha_1 \cdots \alpha_n , e_{s(\alpha_1)}) \in \nG_n$ then
\begin{align*}   G&^3_{n+1}  (\alpha_1 \cdots \alpha_{n}, e_{s(\alpha_1)})  \\
& =   \begin{cases} (\alpha_n \alpha_1 \cdots \alpha_{n}, \alpha_n)
+ (-1)^{n+1} (\alpha_1 \cdots \alpha_{n} \alpha_1, \alpha_1),
\\ (\alpha_n \alpha_1 \cdots \alpha_{n}, \alpha_n) + (-1)^{n+1}
(\alpha_1 \cdots \alpha_{n} \alpha_1, \alpha_1) +
 (\beta \alpha_1 \cdots \alpha_{n}, \beta),  \\
 (\alpha_n \alpha_1 \cdots \alpha_{n}, \alpha_n) + (-1)^{n+1} (\alpha_1 \cdots \alpha_{n} \alpha_1, \alpha_1) + (-1)^{n+1} (\alpha_1 \cdots \alpha_{n} \gamma, \gamma ), \\
 (\alpha_n \alpha_1 \cdots \alpha_{n}, \alpha_n) + (-1)^{n+1} (\alpha_1 \cdots \alpha_{n} \alpha_1, \alpha_1) + (\beta \alpha_1 \cdots \alpha_{n}, \beta) +
 (-1)^{n+1} (\alpha_1 \cdots \alpha_{n} \gamma, \gamma )
\end{cases}
\end{align*}
depending on the existence of $\beta$ and $\gamma$ in $Q_1$ satisfying  $\beta \not = \alpha_n$ and $\beta \alpha_1 \in I$,   $\gamma \not = \alpha_1$ and $\alpha_n \gamma \in I$. Hence 
\begin{align*} G^3_{n+1} (k \nE_n)  & \subset   
 k\left ( ((1,0)_{n+1} \sqcup  (0,1)_{n+1}) \cap (AP_{n+1} //Q_1) \right ), \\
 G^3_{n+1} (k  \nG_n) & \subset  
k\left ( ( (1,1)_{n+1}^{\nG} \sqcup (1,0)_{n+1} \sqcup  (0,1)_{n+1}) \cap  (AP_{n+1} //Q_1) \right ).
\end{align*} 
Now we define the linear map $$T: k \left ( ( (1,1)_{n+1}^{\nG} \sqcup (1,0)_{n+1} \sqcup  (0,1)_{n+1}) \cap  (AP_{n+1} //Q_1) \right ) \to k ( \nE_n \sqcup   \nG_n)$$
as follows: 
\begin{align*}
T (\alpha_1 \cdots \alpha_n \alpha_1 , \alpha_1) & =  (-1)^{n+1} \sum_{i=0}^{\mu(w)-1}(-1)^{in} \tau^{i}(  w ), \\
T(\beta \alpha_1 \cdots \alpha_{n}, \beta) & =  \begin{cases}
w  & \mbox{if $\alpha_n \alpha_1 \not \in I$}, \\
0 & \mbox{if $\alpha_n \alpha_1 \in I$ and $\exists \gamma \not = \alpha_1$ } \\ & \mbox{ such that $\alpha_n \gamma \in I$}, \\
 (-1)^n \sum_{i=0}^{\mu(\tau(w))-1}(-1)^{in}\tau^{i+1}(w) &
\mbox{otherwise},
\end{cases} \\
T( \alpha_1 \cdots \alpha_{n} \gamma, \gamma) & =   \begin{cases}
 - \sum_{i=0}^{\mu(\tau(w))-1}(-1)^{in} \tau^{i+1}(w)  & \mbox{if $\alpha_n \alpha_1 \in I$} ,\\
0 & \mbox{if $\alpha_n \alpha_1 \not \in I$ and $\exists \beta \not = \alpha_n$ } \\ & \mbox{  such that $\beta \alpha_1 \in I$}, \\
(-1)^{n+1}w  & \mbox{otherwise}
\end{cases}
\end{align*}
where $w= (\alpha_1 \cdots \alpha_n, e_{s(\alpha_1)})$ and $\mu (w)$ is the first natural number such that $\tau^{\mu(w)-1}(w) \not \in \C_n(0)$. 
A direct computation shows that $T \circ G^3_{n+1} =  \id$, and hence $G^3_{n+1}$ is injective. 
\end{proof}

\medskip

\begin{proposition}  \label{cero final}
If $n \geq 1$  then
\begin{align*}
\dim \Ker F^0_{n+1} &  = 
\begin{cases}  \vert \E_{n} \vert + \dim k \G_{n} / \Im (1-\tau)  & \quad \mbox{if $n$ is even and $\car k \not = 2$,} \\
\vert \E_{n} \vert  & \quad \mbox{if $n$ is odd and $\car k \not = 2$,} \\
\vert \E_{n} \vert + \dim k \G_{n} / \Im (1-\tau)  & \quad \mbox{if $\car k  = 2$,} 
\end{cases} \\
\dim \Im F^0_{n+1} & = 
\begin{cases} \vert \C_{n} \vert + \vert  \nE_{n} \vert -  \dim k \G_{n}/ \Im (1-\tau) & \quad \mbox{if $n$ is even and $\car k \not = 2$,} \\
\vert \C_{n} \vert  + \vert \nE_{n} \vert & \quad \mbox{if $n$ is odd and $\car k \not = 2$,} \\
\vert \C_{n} \vert + \vert  \nE_{n} \vert -  \dim k \G_{n}/ \Im (1-\tau) & \quad \mbox{if $\car k  = 2$.}
\end{cases}
\end{align*}
\end{proposition}

\begin{proof}
The formula for the dimension of $ \Ker F^0_{n+1}$ follows by a direct computation using Lemma \ref{cero}. The equalities
$$ \dim \Im F^0_{n+1} = \vert (AP_n // Q_0) \vert - \dim \Ker F^0_{n+1}$$
and
$$\vert (AP_n // Q_0) \vert = \vert \G_n  \vert + \vert \nG_n \vert + \vert \E_n \vert + \vert \nE_n \vert = \vert \C_n \vert + \vert \E_n \vert + \vert \nE_n \vert$$
imply the formula for $ \dim \Im F^0_{n+1}$. 
\end{proof}

\subsection{Dimensions of the Hochschild cohomology groups}

We start this section with the computation of the first Hochschild cohomology groups.

\medskip

\begin{theorem} \label{0-1} Let $A = kQ/I$ be a quadratic string  algebra.  Then
\begin{align*}
\dim \HH^0(A) & =   \vert  ^-(Q_0 // \P_1)^-  \vert \  + 1,\\
\dim \HH^1(A) & =    \begin{cases} \vert  ^-(0,0)^-_1  \vert  +  \vert   Q_1
\vert  - \vert   Q_0  \vert \ + 1  & \mbox{if $\car k \not = 2$},\\
\vert  ^-(0,0)^-_1  \vert  +  \vert   Q_1
\vert  - \vert   Q_0  \vert \ + 1 + \vert \G_1\vert & \mbox{if $\car k  = 2$.}
\end{cases}
\end{align*}
\end{theorem}

\begin{proof}
Recall that the maps $ F_1^0$ and $F_1^1$ appearing in the commutative diagram
\[ \xymatrix{
  \Hom_{E-E}(  k Q_0, A) \ar[r]^-{F_{1}} \ar^\cong[dd] &   \Hom_{E-E}(  k Q_{1}, A) \ar^\cong[dd] &  \\ \\
   k(Q_0// Q_0)\oplus k(Q_0// \mathcal{P}_1)  \   \ar[r]^-{  \left ( \begin{matrix} 0 & 0 \\ F^0_{1}  & 0 \\ 0 & F^1_{1}
\end{matrix} \right )} \   &  \ k(Q_{1} // Q_0) \oplus k(Q_{1} // Q_1)\oplus k(Q_{1} // \mathcal{P}_2)  } \]
are given by
\[ F_1^0 (e_r, e_r) =   \sum_{ \{ \beta \in Q_1:  \ t(\beta)=r \} } (\beta , \beta)  - \sum_{ \{ \beta \in Q_1: \ s(\beta)=r  \} } (\beta, \beta ),     \]
\[ F_1^1 (e_r, \gamma) =   \sum_{ \{ \beta \in Q_1:  \ t(\beta)=r \} } (\beta , \beta \gamma)  - \sum_{ \{ \beta \in Q_1: \ s(\beta)=r  \} } (\beta, \gamma \beta ).    \]
Then
$$F_1^0 ( \sum_{ r  \in Q_0}   \lambda_r (e_r, e_r))=  \sum_{ \beta \in Q_1 } ( \lambda_{t(\beta)} - \lambda_{s(\beta)} ) (\beta , \beta) =0$$
implies that $\lambda_i = \lambda_j$ whenever there exists an arrow $\beta: i \to j$.  Since $Q$ is connected, we have that $\lambda_i = \lambda_j$ for any $i,j$.  Hence
$\dim \Ker F_1^0= 1$.

On the other hand,  $(Q_0 // \P_1) = ^-(Q_0 // \P_1)^- \sqcup ^-(Q_0 // \P_1)^+ \sqcup ^+(Q_0 // \P_1)^- \sqcup ^+(Q_0 // \P_1)^+$ and we have that:
\begin{itemize}
\item [(i)] $F_1^1 (^-(Q_0 // \P_1)^-) = 0$;
\item [(ii)] $F_1^1$ induces a bijection from $^-(Q_0 // \P_1)^+$ to $^{-}(0,1)_{1}$;
\item[(iii)]  $F^1_{1}$ induces a bijection from $^+(Q_0 // \P_1)^-$ to $(1,0)^{-}_{1}$;
\item[(iv)] there exists a bijection $\phi_1: (1,0)_{1}^+ \to {^+(0,1)_{1}}$ given by $\phi_1 (\alpha , \alpha  \gamma) = ( \beta,  \gamma \beta)$ such that
$F^1_{1} (^+(Q_0 // \P_1)^+)  =  (id  - \phi_{1}) ((1,0)^+_{1}).$
\end{itemize}
Hence $^-(Q_0 // \P_1)^-$ is a basis for $\Ker F_1^1$ and then
$$  \dim  \HH^0(A)  = \dim \Ker F_1 = \dim \Ker F_1^0 + \dim \Ker F_1^1 =  1 +   \vert ^-(Q_0 // \P_1)^- \vert .$$
From these computations we get that
\begin{align*}
\dim \Im F_1^0  & =  \vert (Q_0 // Q_0) \vert - \dim \Ker F_1^0 = \vert Q_0 \vert -1 , \\
 \dim \Im F_1^1 & =  \vert (Q_0 // \P_1) \vert - \dim \Ker F_1^1 \\
& =  \vert ^-(0,1)_1 \vert + \vert
(1,0)^-_1 \vert + \vert (1,0)^+_1  \vert  .
 \end{align*}
From Proposition \ref{nuc im} and Proposition \ref{cero final} we have that
\begin{align*}
 \dim \Ker F_2 & =  \dim \Ker F_2^0 + \dim \Ker F_2^1 \\
 & =   \vert \E_1 \vert  + \vert ^-(0,0)^-_1\vert +  \vert ^-(0,1)_1
\vert + \vert
(1,1)_1\vert + \vert (1,0)_1 \vert 
\end{align*}
if $\car k \not = 2$ and 
$$\dim \Ker F_2 =   \vert \E_1 \vert  + \vert ^-(0,0)^-_1\vert +  \vert ^-(0,1)_1 \vert + \vert
(1,1)_1\vert + \vert (1,0)_1 \vert + \dim k\G_1/\Im (1-\tau)$$
if $\car k = 2$. 
Now $\vert (1,1)_1 \vert  = \vert  Q_1 \vert $,  $\E_1 = \emptyset$ since $A$ is finite dimensional and 
$$k\G_1/\Im (1-\tau) = k\G_1 ,$$
so
\[\dim \HH^1(A) =  \begin{cases} \vert  ^-(0,0)^-_1  \vert  +  \vert   Q_1
\vert  - \vert   Q_0  \vert \ + 1  & \mbox{if $\car k \not = 2$},\\
\vert  ^-(0,0)^-_1  \vert  +  \vert   Q_1
\vert  - \vert   Q_0  \vert \ + 1 + \vert \G_1 \vert & \mbox{if $\car k  = 2$.}
\end{cases}\] 
\end{proof}

The dimensions that have been computed in Propositions \ref{nuc im} and \ref{cero final} lead us to the following theorem.
\begin{theorem}\label{n>1}
If $A = kQ/I$ is a quadratic string  algebra and $n \geq 2$   then
\begin{align*}
\dim \HH^{n}(A)  =   \vert  ^-(0,0)_n^- \vert + \vert  \E_n  \vert    - \vert \nE_{n-1} \vert  + \vert  \left ( (1,0)_n \sqcup
{^-(0,1)}_n \right ) \cap (AP_n//Q_1)  \vert \\
 +        \begin{cases}
\dim  k\G_n/\Im(1-\tau)     \quad & \mbox {if $n$ is even and $\car k \not = 2$}, \\
\dim k\G_{n-1}/\Im (1-\tau)
   & \mbox{if $n$ is odd and $\car k \not = 2$,} \\
\dim  k\G_n/\Im(1-\tau)  + \dim k\G_{n-1}/\Im (1-\tau)   \quad & \mbox {if $\car k  = 2$.}
\end{cases} \end{align*}
\end{theorem}

\begin{proof}
From Propositions \ref{cero final} and \ref{nuc im} we get that if $n$ is even and $\car k \not = 2$ then
\begin{align*}
\dim \HH^{n}(A) &= \dim \Ker F_{n+1} - \dim \Im F_{n} \\
&=    \vert ^-(0,0)_{n}^- \vert  +  \vert  ^-(0,1)_{n} \vert  + \vert   (1,1)_{n} \vert  +
 \vert  (1,0)_{n} \vert \\
 & -   \vert \left ( ^{-}(0,1)_{n} \sqcup  (1,0)_{n} \sqcup  (1,1)_{n}  \right )  \cap  ( AP_{n} // \P_2)  \vert  \\
& +    \vert \E_{n} \vert + \dim k \G_{n} / \Im (1-\tau)
- \vert \C_{n-1} \vert  - \vert \nE_{n-1} \vert
\end{align*}
and the desired formula follows since the identification
$$(\alpha_1 \cdots \alpha_{n-1} \alpha_1, \alpha_1) \leftrightarrow (\alpha_1 \cdots \alpha_{n-1}, e_{s(\alpha_1)})$$
implies  that $\vert (1,1)_n \cap ( AP_{n} // Q_1) \vert 
= \vert  \C_{n-1} \vert $.   
Similarly one can deduce the other formulae. 
\end{proof}

\begin{definition} \label{gentle}
A string algebra $A = kQ/I$ is called a { \bf gentle algebra} if in
addition  $(Q,I)$ satisfies :
\begin{itemize}
\item [G1)]  For an arrow $\alpha$ in $Q$ there exists  at most one arrow $\beta$ and at most one arrow $\gamma$ such that $\alpha\beta  \in I$ and $ \gamma \alpha  \in I$;
\item [G2)] $I$ is quadratic.
\end{itemize}
\end{definition}

The Hochschild cohomology groups of gentle algebras have already been computed in~\cite{L}, and these results have been expressed in terms of the derived invariant introduced by Avella-Alaminos and Geiss in~\cite{AAG}.   As a consequence of our previous theorem, we recover the results in~\cite{L}.

\begin{corollary}\label{gentil}
If $A = kQ/I$  is a gentle algebra, then 
\begin{align*}
\dim \HH^0 (A) & =    \vert  ^-(Q_0 // \P_1)^-  \vert  + 1, \\
\dim \HH^1 (A) & = 
\begin{cases} \vert  ^-(0,0)^-_1  \vert  +  \vert   Q_1
\vert  - \vert   Q_0  \vert \ + 1  & \mbox{if $\car k \not = 2$},\\
\vert  ^-(0,0)^-_1  \vert  +  \vert   Q_1
\vert  - \vert   Q_0  \vert \ + 1 + \vert (Q_1 //Q_0) \vert & \mbox{if $\car k  = 2$,}
\end{cases} \\
\dim \HH^n (A) & =  \vert {^-(0,0)^-_n} \vert +  \vert   \E_n \vert + a \dim k\G_n/\Im(1-\tau) + b\dim  \G_{n-1}/\Im(1-\tau) 
\end{align*}
where
\[ (a,b)= 
\begin{cases}
(1,0)   & \mbox{if $n \geq 2$, $n$  even, $\car k \not = 2$},\\
(0,1) & \mbox{if $n \geq 2$,  $n$ odd, $\car k \not = 2$,} \\
(1,1)  & \mbox{if $n \geq 2$, $\car k  = 2$. }
\end{cases} \]
\end{corollary}

\begin{proof} From Theorems \ref{0-1} and \ref{n>1} it is clear that we only have to prove that 
$$ \vert \nE_{n-1} \vert  =     \vert  \left ( (1,0)_n \sqcup
{^-(0,1)}_n \right ) \cap (AP_n//Q_1)  \vert .$$
Since $A$ is gentle,  $\nG_{n-1} = \emptyset$, and in this case the injective map  
$$G^3_n: k (\nE_{n-1} ) \to k  \left ( ( (1,0)_n \sqcup
{(0,1)}_n  ) \cap (AP_n//Q_1) \right )$$ studied in Lemma \ref{cero}  satisfies
\begin{multline*}
 G^3_{n} (\alpha_1 \cdots \alpha_{n-1}, e_r) \\ =
\begin{cases}
 (\beta \alpha_1 \cdots \alpha_{n-1}, \beta) & \in k((1,0)^-_{n}) , \\
 (-1)^{n} (\alpha_1 \cdots \alpha_{n-1} \gamma, \gamma )  & \in  k({^-(0,1)}_{n}), \\
 (\beta \alpha_1 \cdots \alpha_{n-1}, \beta) +
 (-1)^{n} (\alpha_1 \cdots \alpha_{n-1} \gamma, \gamma ) & \in k((id + (-1)^{n} \phi_{n})
(1,0)^+_{n})
\end{cases} \end{multline*}
depending on the existence of $\beta$ and $\gamma$. So
$$ \vert \nE_{n-1} \vert  = \dim \Im G^3_n = \vert  \left ( (1,0)_n \sqcup
{^-(0,1)}_n \right ) \cap (AP_n//Q_1)  \vert.$$  \end{proof}

\begin{example} The algebra presented in Example \ref{ejemplo1} is a quadratic string algebra, then by Theorems \ref{0-1} and  \ref{n>1} we have that
 \[\dim \HH^n(A) =  \begin{cases} 2  \qquad & \mbox{if $n=0$,} \\
3 & \mbox{if $n=1$,} \\ 
1 & \mbox{if $n=2i+1$, $i\geq 1$, } \\
0 & \mbox{otherwise.} 
\end{cases}\] The algebra in Example \ref{ejemplo2} is a gentle algebra, then we can  compute its cohomology using   Corollary \ref{gentil}, 
 \[\dim \HH^n(A) =  \begin{cases}  1 \qquad & \mbox{if $n=0$,} \\
3  & \mbox{if $n=1$,} \\
2 & \mbox{if   $n = 7$,} \\ 
1 & \mbox{otherwise.} 
\end{cases} \] 
 \end{example}

\medskip

The following results will be used in the description of the Lie bracket defined in $\HH^*(A)$.

\begin{proposition}  \label{gentle cero} Let $A$ be a gentle algebra,  $n >1$  and let   $$f= \sum_{i=1}^m \lambda_i (\rho_i, \gamma_i) \in \Ker F^1_{n+1} $$ such that $(\rho_i, \gamma_i) \not \in  {^-(0,0)_{n}^-}$, $(\rho_i, \gamma_i) \not \in  {(1,1)_{n}\cap (AP_n //Q_1)}$ for all $i$ with $1 \leq i \leq m$.  Then $\overline{f}= 0$ in  ${\HH}^{n}(A)$.
\end{proposition}

\begin{proof}
From Lemma \ref{nucleo} we know that  $f$ can be written as $f = f_1
+ f_2$ where $$f_1 \in \Ker F^1_{n+1} \cap k(AP_n
// Q_1) \quad \mbox{and} \quad f_2 \in \Ker F^1_{n+1} \cap k(AP_n //
\P_2).$$ By Proposition \ref{especial}
 we have that $\overline{f_2} = 0$ in ${\HH}^{n}(A)$.
On the other hand, $f_1$ is a linear combination of elements in
$$\left((1,0)_{n}^- \sqcup {^-(0,1)_{n}}  \sqcup (id +(-1)^{n}
\phi_{n})((1,0)_{n}^+) \right)\cap k(AP_{n} // Q_1).$$ The proof of Corollary \ref{gentil} shows that 
$$ \left(   (1,0)_{n}^- \sqcup ^-(0,1)_{n} \sqcup  (id +(-1)^{n}
\phi_{n})(1,0)_{n}^+ \right) \cap k(AP_{n} // Q_1) = \Im G^3_n \subset \Im F^0_n.$$ Hence
$\overline{f_1} = 0 $ in  ${\HH}^{n}(A)$. 
\end{proof}

\begin{corollary} \label{gentil final}  Let  $A$ be a gentle algebra,  $n >1$ and let  $$f= \sum_{i=1}^m \lambda_i (\rho_i, \gamma_i) \in \Ker F^1_{n+1} $$  such that $(\rho_i, \gamma_i) \not \in  {^-(0,0)_{n}^-}$ for all $i$ with $1 \leq i \leq m$. If $\G_{n-1} = \emptyset$ then $\overline{f} =0$ in  ${\HH}^{n}(A)$.
\end{corollary}

\begin{proof} By assumption $\G_{n-1} = \emptyset$ and, since $A$ is gentle,  $\nG_{n-1} = \emptyset$.  Then 
$$ (1,1)_{n}\cap (AP_n//Q_1) = (1,1)_{n}^{\G}\sqcup (1,1)_{n}^{\nG} = \emptyset$$
and hence the desired result follows from the previous proposition. 
\end{proof}

\section{Gerstenhaber algebra}

\subsection{Comparison morphisms}

The cup product and the Lie bracket in the cohomology $\HH^*(A)$ are
induced by operations defined using the bar resolution, and we have made the computations of $\HH^n(A)$ using  Bardzell's resolution.  
In this section we will construct comparison morphisms between the bar resolution and  Bardzell's resolution in order to get formulae for the cup product and the Lie bracket.

We start by recalling the definition of these structures at the level of cochains using the bar resolution $( A^{\otimes^n} , b_n)_{n\geq 0}$.  Given $f \in \Hom_{E-E}(
A^{\otimes^{n}} , A)$ and $g \in \Hom_{E-E}( A^{ \otimes^{m}} , A)$ we have
$$f \cup g \in \Hom_{E-E}( A^{\otimes^{m+n}} , A) \quad \mbox{and} \quad [f,g] \in \Hom_{E-E}(
A^{\otimes^{m+n-1}} , A)$$ defined by
$$  f \cup g ( v_1 \otimes \cdots \otimes v_{n+m}  ) = f( v_1 \otimes \cdots \otimes v_{n}  )g( v_{n+1} \otimes \cdots \otimes v_{n+m}  ) $$
and $$ [f,g ]= f \circ g - (-1)^{(n-1)(m-1)} g \circ f
$$where
\begin{equation} f \circ g = \sum_{i=1}^n(-1)^{(i-1)(m-1)}f \circ_i g \label{(1)}
\end{equation} and
$$ f \circ_i g(v_1 \otimes \ldots \otimes v_{n+m-1}  ) = f(v_1 \otimes
\ldots \otimes v_{i-1} \otimes g( v_i \otimes \ldots v_{i+m-1} ) \otimes v_{i+m} \otimes
\ldots v_{n+m-1} ). $$
These structures are easily carried to Bardzell's resolution
using the comparison morphisms 
\[ \xymatrix{  
A \otimes kAP_n \otimes A   \ar@<.9ex>[r]^{U_n}&     \ar@<.9ex>[l]^{V_n} A \otimes A^{\otimes ^n} \otimes A
} \]
given by the $A$-bimodule morphisms 
\begin{align*}
U_0(1 \otimes e \otimes 1) & =  e \otimes 1, \\
U_n(1\otimes \alpha_1\alpha_2 \ldots \alpha_n \otimes 1) & =  1\otimes \alpha_1 \otimes  \ldots  \otimes \alpha_n \otimes 1 \quad  \mbox{for $n \geq 1$},\\
V_0 (1 \otimes 1) & =   1 \otimes 1 \otimes 1 = \sum_{i \in Q_0} 1 \otimes e_i \otimes 1, \\
V_1 (1 \otimes \gamma \otimes 1) & =  \begin{cases}
0 \quad & \mbox{if $\vert \gamma \vert =0$},\\
\sum_{i=1}^s\alpha_1 \cdots \alpha_{i-1}\otimes \alpha_i\otimes
\alpha_{i+1} \cdots \alpha_s & \mbox{if $\gamma= \alpha_1 \cdots\alpha_s$},
\end{cases}
\end{align*}
\begin{multline*}
V_n (1\otimes v_1 \otimes \cdots \otimes  v_n \otimes1) \\
 = \begin{cases}
  v_1'\otimes \alpha_1 v_2 \cdots v_{n-1}\alpha_n\otimes v_n'  \quad & \mbox{if $v_1= v'_1 \alpha_1$, $v_n = \alpha_n v'_n$} \\
& \mbox{and  $\alpha_1 v_2 \cdots v_{n-1}\alpha_n \in AP_n $, } \\
 0 & \mbox{otherwise}.
\end{cases}
\end{multline*}

\begin{lemma} \label{comparacion}
The maps $ U=(U_n)_{n \geq 0}$ and $ V=(V)_{ n \geq 0}$ are morphism of complexes and   $ V \circ U = \id.$
\end{lemma}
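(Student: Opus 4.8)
The plan is to verify all three assertions on the $A$-$A$-bimodule generators $1 \otimes (-) \otimes 1$, since $U_n$, $V_n$ and the differentials $d_n$, $b_n$ of both resolutions are morphisms of $A$-$A$-bimodules. This reduces everything to computations inside $A$ together with the combinatorics of $AP_n$. I would also dispose of the augmentation level at the bottom, where $m \circ U_0 = \mu$ holds trivially because $U_0(1 \otimes e \otimes 1) = e \otimes 1$ and multiplying gives $e = \mu(1 \otimes e \otimes 1)$.

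First I would settle $V \circ U = \id$, which is essentially immediate for $n \geq 1$: applying $U_n$ to $1 \otimes \alpha_1 \cdots \alpha_n \otimes 1$ produces $1 \otimes \alpha_1 \otimes \cdots \otimes \alpha_n \otimes 1$, and since each $v_i = \alpha_i$ is a single arrow with $v_1' = e_{s(\alpha_1)}$, $v_n' = e_{t(\alpha_n)}$ and $\alpha_1 \cdots \alpha_n \in AP_n$ by hypothesis, the nonzero branch of $V_n$ returns exactly $1 \otimes \alpha_1 \cdots \alpha_n \otimes 1$. The degree-zero case follows from the formulas for $U_0$ and $V_0$, the sum $\sum_{i} 1 \otimes e_j e_i \otimes 1$ collapsing to $1 \otimes e_j \otimes 1$.

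Next I would check that $U$ is a morphism of complexes, i.e. $b_n U_n = U_{n-1} d_n$. Evaluating the bar differential on $1 \otimes \alpha_1 \otimes \cdots \otimes \alpha_n \otimes 1$, the crucial observation is that every interior summand contracts two consecutive arrows $\alpha_i \alpha_{i+1}$ with $1 \leq i \leq n-1$; since $\alpha_i \alpha_{i+1} \in I$ by the definition of $AP_n$, each such factor is zero in $A$ and the whole term vanishes. Only the two extremal terms survive, namely $\alpha_1 \otimes \alpha_2 \otimes \cdots \otimes \alpha_n \otimes 1$ and $(-1)^n\, 1 \otimes \alpha_1 \otimes \cdots \otimes \alpha_{n-1} \otimes \alpha_n$, and these are precisely $U_{n-1}$ applied to the two summands of $d_n(1 \otimes \alpha_1 \cdots \alpha_n \otimes 1)$. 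This handles $U$.

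The main work, and the step I expect to be the obstacle, is showing that $V$ is a morphism of complexes, that is $d_n V_n = V_{n-1} b_n$, because $V_n$ is defined by cases and $b_n$ produces many terms. On a generator $1 \otimes v_1 \otimes \cdots \otimes v_n \otimes 1$ I would expand $b_n$ into its two extremal terms $v_1 \otimes v_2 \otimes \cdots \otimes v_n \otimes 1$ and $(-1)^n\, 1 \otimes v_1 \otimes \cdots \otimes v_{n-1} \otimes v_n$, plus the interior contractions $(-1)^i\, 1 \otimes \cdots \otimes v_i v_{i+1} \otimes \cdots \otimes 1$, and apply $V_{n-1}$ to each. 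The heart of the matter is the bookkeeping: $V_{n-1}$ is nonzero only when the relevant concatenation lands in $AP_{n-1}$, so each interior contraction is either recorded (when $v_iv_{i+1}$ carries a relation) or discarded, and one compares the result with the two summands of $d_n V_n(1 \otimes v_1 \otimes \cdots \otimes v_n \otimes 1)$ after rewriting $v_1' \alpha_1 = v_1$ and $\alpha_n v_n' = v_n$. I expect the difficulty to lie in organizing the case analysis (which factors $v_i$ are single arrows, whether the boundary arrows $\alpha_1,\alpha_n$ exist, and which concatenations fall into $AP_{n-1}$) rather than in any single hard estimate; the low-degree cases $n = 0, 1$ I would treat separately using the explicit formulas for $U_0$, $V_0$ and $V_1$.
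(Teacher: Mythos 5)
Your proposal cannot be compared with the paper's argument for the simple reason that the paper offers none: Lemma \ref{comparacion} is stated there without proof. Judged on its own, your reduction to bimodule generators is the right setting, and your first two verifications are correct and complete: $V\circ U=\id$ holds on generators exactly as you say (including the degree-zero collapse $\sum_{i}1\otimes e_je_i\otimes 1=1\otimes e_j\otimes 1$), and $b_nU_n=U_{n-1}d_n$ follows because every interior summand of the bar differential contracts a product $\alpha_i\alpha_{i+1}\in I$, which is zero in $A$, leaving precisely the two terms of $d_n$.

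The gap is the third assertion, $d_nV_n=V_{n-1}b_n$, which you correctly identify as the main content but then only describe rather than prove. Moreover, the description you give --- each interior contraction is ``either recorded or discarded'' and then compared with the two summands of $d_nV_n$ --- is not an accurate picture of what the verification requires: the identity is not a term-by-term match. Already for $n=2$, take $v_1,v_2\in Q_1$ with $v_1v_2\notin I$ and $v_1v_2\neq 0$ in $A$. Then $V_2(1\otimes v_1\otimes v_2\otimes 1)=0$, while
\[
V_1b_2(1\otimes v_1\otimes v_2\otimes 1)= v_1\otimes v_2\otimes 1-\bigl(v_1\otimes v_2\otimes 1+1\otimes v_1\otimes v_2\bigr)+1\otimes v_1\otimes v_2,
\]
four nonzero terms that vanish only after pairwise cancellation, because $V_1(1\otimes v_1v_2\otimes 1)$ expands the surviving path $v_1v_2$ over both of its arrow positions. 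In general, the middle summands of $b_n$ whose products $v_iv_{i+1}$ survive in $A$ produce terms that must cancel against the extremal ones, so a complete proof has to organize these cancellations (according to whether each $v_i$ is an arrow, and whether each consecutive product lies in $I$, vanishes in $A$, or survives as a longer path). Your plan, expanded along these lines, does succeed --- the statement is true and the verification is elementary --- but as written the proposal stops exactly where the actual work begins.
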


Using the isomorphisms 
$$\Hom_{A-A}(A \otimes A^{\otimes^{n}} \otimes A, A) \simeq
\Hom_{E-E}(  A^{\otimes^{n}} , A),$$
$$\Hom_{A-A}(A \otimes kAP_{n} \otimes A, A) \simeq
\Hom_{E-E}(kAP_{n} , A)$$  
we get that the morphisms $U$ and $V$  induce quasi-isomorphisms 
\begin{align*} U^{\bullet} & =   \left (
U^{n} : \Hom_{E-E}( A^{\otimes^{n}} , A) \longrightarrow \Hom_{E-E}(
kAP_n , A)\right )_{n\geq 0} \mbox{ and } \\
 V^{\bullet}
& =  \left ( V^{n} : \Hom_{E-E}( kAP_n , A)
 \longrightarrow \Hom_{E-E}( A^{\otimes^{n}} , A)\right )_{n\geq 0}.
\end{align*}
With these quasi-isomorphisms, the cup product and the Lie bracket, which we still denote $\cup$ and $[ \ , \ ]$, can be defined as follows:
given $f \in \Hom_{E-E}(kAP_n , A)$ and $g \in \Hom_{E-E}(kAP_m , A)$,
$$f \cup g \in  \Hom_{E-E}(kAP_{n+ m} , A) \quad \mbox{and} \quad f \circ_i g \in \Hom_{E-E}(kAP_{n+ m-1 } , A),  1 \leq i \leq n $$
are defined by 
$$f\cup g  =  U^{n+m}(  V^n(f) \cup  V^m(g)) \qquad \mbox{and} \qquad  f\circ_i g    =    U^{m+n-1}(  V^n(f) \circ_i V^m(g)).$$
As usual,
\begin{align*}
F_{n+m+1}(f \cup g ) &=  F_{n+1}(f) \cup g + (-1)^n f \cup
F_{m+1}(g)  \\ F_{n+m}([ f,g]) &= [f, F_{m+1}(g) ] + (-1)^{m-1} [F_{n+1}(f),
g]
\end{align*} so the products $\cup$ and $[ - , - ]$ defined in the complex
$(\Hom_{E-E}(kAP_{n } , A), F_n)_{n\geq 0}$ induce  products at the cohomology level. 

\subsection{Formulae for the cup product and the Lie bracket}

The following detailed computations will be useful in order to describe the desired structures:
$$f\cup g( \alpha_1 \cdots \alpha_{n+m} ) =f (\alpha_1 \cdots  \alpha_{n}  )g(\alpha_{n+1} \cdots
\alpha_{n+m} ) $$
and, if $g \in \Hom_{E-E}(kAP_m , A)$ is a basis element, that is, $g$ sends a fix basis element in $AP_m$ to a basis element in $\P$, and it is zero otherwise, then
$$f \circ_1 g(\alpha_1 \ldots \alpha_{n+m-1}) = \mu f( \beta \alpha_{m+1} 
\cdots   \alpha_{n+m-1} )$$
if $g(\alpha_1  \cdots  \alpha_{m}) = \mu \beta \in   \P_1 $ and $\beta \alpha_{m+1} \in I$, 
$$f \circ_n g(\alpha_1 \ldots \alpha_{n+m-1})  =   f(  \alpha_{1} 
\cdots  \alpha_{n-1} \beta ) \mu $$
if $g(\alpha_n  \cdots  \alpha_{n+m-1}) =  \beta \mu \in   \P_1$ and $\alpha_{n-1}\beta \in I $, 
$$f\circ_i g  (  \alpha_1 \cdots \alpha_{n+m-1})   =  f (  \alpha_1 \cdots  \alpha_{i-1} \beta \alpha_{i+m} \cdots \alpha_{n+m-1}) $$
if $g(\alpha_i  \cdots  \alpha_{i+m-1}) =  \beta \in  Q_1$ and $\beta \alpha_{i+m}, \alpha_{i-1} \beta \in I$, 
and it is zero otherwise. \\

Now,  using the identification $\Hom_{E-E}( kAP_{m+n}, A) \simeq
k(AP_{m+n}//\P)$ given by  $$f_{(\rho, \gamma) }\longleftrightarrow
(\rho, \gamma),$$
we get that, given 
$$(\rho, \gamma)  =  (\alpha_1 \ldots \alpha_n, \gamma) \in ( AP_n//  \P) \quad \mbox{and} \quad (\rho', \gamma') = (\beta_1 \ldots \beta_m, \gamma') \in ( AP_m//  \P )$$
we have that
\begin{itemize}
\item if $\alpha_n \beta_1 \in I$ and $\gamma \gamma' \not \in I$ then $(\rho, \gamma)  \cup (\rho', \gamma') = (\rho\rho', \gamma\gamma')$;
\item if $\gamma' \in Q_1$, $\gamma' = \alpha_i$,  $\alpha_{i-1} \beta_1 \in I$ and $\beta_m \alpha_{i+1} \in I$ then  
$$(\rho, \gamma)  \circ_i (\rho', \gamma') = (\alpha_1 \dots \alpha_{i-1} \rho' \alpha_{i+1} \dots \alpha_n, \gamma);$$ 
\item if $\gamma' \in  \P_2$, $\gamma'= \mu \alpha_1$ and $\beta_m \alpha_2 \in I$ then $(\rho, \gamma)  \circ_1 (\rho', \gamma')= 
(\rho' \alpha_{2} \cdots \alpha_n, \mu \gamma)$;
\item  if $\gamma' \in  \P_2$, $\gamma'= \alpha_n \mu$ and $\alpha_{n-1} \beta_1 \in I$ then $(\rho, \gamma)  \circ_n (\rho', \gamma') = 
(\alpha_1 \cdots \alpha_{n-1}\rho', \gamma \mu)$
\end{itemize}
and all the other cases are zero.

\subsection{Vanishing of the cup product and the Lie bracket}

In this section we find conditions on the presentation $(Q,I)$ of the quadratic string algebra that ensures the vanishing of the cup product and the Lie bracket.  We start with a remark that will be used throughout this section.

\begin{remark}
Any  $\overline{f}  =  \overline{\sum_i \lambda_i (\rho_i,\gamma_i)
}$ in $ \HH^n(A)$ can be written as $\overline{f} = \overline{f_1} +
\overline{f_2}$ with
$$f_1 = \sum_{\vert \gamma_i \vert = 0}
\lambda_i (\rho_i,\gamma_i) \in
\Ker F_{n+1}^0, \qquad
f_2 =  \sum_{\vert \gamma_i \vert
> 0} \lambda_i (\rho_i,\gamma_i) \in \Ker F_{n+1}^1$$
Moreover from Lemmas \ref{nucleo} and \ref{cero} we have that $f_1 \in k(\E_n \sqcup \G_n)$
and $$f_2 \in k(^-(0,0)_n^- \sqcup (1,0)_n^-
\sqcup ^-(0,1)_n \sqcup (1,1)_n \sqcup (id +(-1)^{n}
\phi_{n})((1,0)_{n}^+)) .$$
\end{remark}

\begin{proposition} 
Let $A = k Q/I$ be a quadratic string algebra, $n, m >0$.  If $\mathcal{G}_n = \emptyset = \mathcal{G}_m$ then
$\HH^n(A) \cup \HH^m(A) = 0$.
\end{proposition}

\begin{proof}
Let $\overline{f} = \overline{ f_1} + \overline{f_2} \in \HH^n(A)$, $\overline{g} =  \overline{g_1} + \overline{g_2} \in \HH^m(A)$ with $f_1 \in \Ker F_{n+1}^0, f_2 \in \Ker F_{n+1}^1, g_1 \in \Ker F_{m+1}^0, g_2 \in \Ker F_{m+1}^1$. We will show that, for any $i,j= 1,2$,
$$\overline{f_i} \cup \overline{g_j} = \overline{f_i
\cup g_j} = 0.$$
The assumption $\mathcal{G}_n = \emptyset$ and Lemma  \ref{cero} imply that $f_1 = \sum_i \lambda_i
(\rho_i, \gamma_i) \in k(\mathcal{E}_n)$, so for any
$(\rho', \gamma') \in (AP_m // \mathcal{P})$ we have that
$$(\rho_i, \gamma_i)\cup (\rho', \gamma') = 0$$
because  $\rho_i\rho'\not \in AP_{n+m}$. Hence $f_1 \cup g_j=0$.   Similarly,  $ f_ i \cup g_1  = 0$.  Finally consider ${f_2} \cup {g_2}$ where
$$ f_2 =  \sum_{i} \mu_i (\rho_i, \gamma_i) \in \Ker F_{n+1}^1, \qquad
g_2 =  \sum_{j} \mu_j' (\rho_j', \gamma_j') \in \Ker F_{m+1}^1.$$
Recall that
$$ ^-(0,0)_n^- \sqcup (1,0)_n^-
\sqcup ^-(0,1)_n \sqcup (1,1)_n \sqcup (id +(-1)^{n}
\phi_{n})((1,0)_{n}^+) $$ is a basis of $\Ker F_{n+1}^1$ and observe that $f_2 \cup g_2  \in \Ker F^1_{n+m+1} \cap k( AP_{n+m} // \P_2)$.
If $(\rho_i, \gamma_i) \in { ^-(0,0)_n^- }$ or $(\rho_j',
\gamma_j')
 \in {^-(0,0)_m^-}$ then  $\gamma_i\gamma_j' \in I$ and hence  $(\rho_i\rho_j',\gamma_j\gamma_j') = 0$.  If
$(\rho_i, \gamma_i) \in { ^-(0,1)_n } $ and  $(\rho_j', \gamma_j')
 \in {(1,0)_m^-}$  we have that $$(\rho_i, \gamma_i) = (\hat{\rho}_i\alpha_1, \hat{\gamma}_i\alpha_1) \quad \mbox{and} \quad (\rho_j', \gamma_j') =  (\alpha_2\hat{\rho'}_j, \alpha_2\hat{\gamma'}_j)$$ with $\alpha_1,
\alpha_2 \in Q_1$.  Then
$(\rho_i\rho_j',\gamma_i\gamma'_j)  =
(\hat{\rho}_i\alpha_1\alpha_2\hat{\rho'}_j,\hat{\gamma}_i\alpha_1\alpha_2\hat{\gamma'}_j) = 0$ because $\alpha_1\alpha_2 \in I$ or $\hat{\rho}_i\alpha_1\alpha_2\hat{\rho'}_j \not \in AP_{n+m}$.
In all the remaining cases $(\rho_i\rho_j',\gamma_i\gamma_j')  \not \in {^-(0,0)_{n+m}^-}$, and the desired result follows from Proposition \ref{especial}. 
\end{proof}

\medskip

\begin{corollary}
Let $A = k Q/I$ be a quadratic string algebra, $\car k \not = 2$.  Then $\HH^n(A) \cup \HH^m(A) = 0$ for any $n, m >0$ odd natural numbers.
\end{corollary}

\begin{proof}
The assertion follows from the previous proof since for $n$ odd we have that $\Ker F^0_{n+1} = k \E_n$ and hence the hypothesis $\G_n = \emptyset$ is superfluous in this case. 
\end{proof}

Now we will describe the Lie bracket for gentle algebras.

\begin{lemma} \label{composicion} If $A = kQ/I$ is a gentle algebra,  $n, m > 1$ and $g \in {^-(0,0)_m^-}$.  Then $ f \circ  g = 0$ for any $f \in (AP_n//  \P)$.
\end{lemma}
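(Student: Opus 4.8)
The plan is to use bilinearity to reduce to the case where $f = (\rho,\gamma)$ is a basis element, write $\rho = \alpha_1 \cdots \alpha_n$ and $g = (\rho',\gamma')$ with $\rho' = \beta_1 \cdots \beta_m$, and then show that \emph{every} summand $f \circ_i g$ of $f \circ g = \sum_{i=1}^{n}(-1)^{(i-1)(m-1)} f \circ_i g$ vanishes by appealing to the explicit formulas for $(\rho,\gamma) \circ_i (\rho',\gamma')$ recorded just before this lemma. The first step is to record what the hypothesis $g \in {^-(0,0)_m^-}$ buys us: since $g$ lies in $(0,0)_m$, the path $\gamma'$ neither begins with $\beta_1$ nor ends with $\beta_m$; and since $(AP_m // Q_0) \subset {^+(0,0)_m^+}$, the pair $g$ cannot have $\gamma' \in Q_0$, so $\gamma'$ is a path of positive length. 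Looking at the formulas, $f \circ_i g$ is automatically zero unless $|\gamma'| \in \{1,2\}$, so only these two cases need work.

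For $|\gamma'| = 1$, so $\gamma' \in Q_1$, a nonzero $f \circ_i g$ forces $\alpha_i = \gamma'$ and the substituted word $\alpha_1 \cdots \alpha_{i-1}\,\rho'\,\alpha_{i+1}\cdots \alpha_n$ to lie in $AP_{n+m-1}$. The idea is to derive a contradiction at a junction using gentleness. If $i > 1$, membership forces $\alpha_{i-1}\beta_1 \in I$, while $\rho \in AP_n$ already gives $\alpha_{i-1}\alpha_i = \alpha_{i-1}\gamma' \in I$; condition G1 (uniqueness of the arrow completing a relation on the right of $\alpha_{i-1}$) then forces $\beta_1 = \gamma'$, contradicting $\gamma' \notin \beta_1 kQ$. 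If instead $i < n$, the junction $\beta_m \alpha_{i+1} \in I$ together with $\alpha_i\alpha_{i+1} = \gamma'\alpha_{i+1} \in I$ and G1 forces $\beta_m = \gamma'$, contradicting $\gamma' \notin kQ\beta_m$. Since $n \geq 2$, each index $i$ satisfies $i>1$ or $i<n$, so all summands vanish.

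For $|\gamma'| = 2$, write $\gamma' = cd$ with $c,d \in Q_1$; here the formulas leave only $\circ_1$ and $\circ_n$ as potentially nonzero. In the $\circ_1$ term one needs $\alpha_1 = d$ and $\beta_m\alpha_2 \in I$, but $\rho \in AP_n$ gives $d\alpha_2 = \alpha_1\alpha_2 \in I$, and G1 (uniqueness on the left of $\alpha_2$) forces $\beta_m = d$, which is impossible because $\gamma'$ ends in $d$ while $(0,0)_m$ forbids $\gamma'$ from ending in $\beta_m$. Symmetrically, the $\circ_n$ term needs $\alpha_n = c$ and $\alpha_{n-1}\beta_1 \in I$, whereas $\alpha_{n-1}c = \alpha_{n-1}\alpha_n \in I$ and G1 force $\beta_1 = c$, contradicting $\gamma' \notin \beta_1 kQ$. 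As $n \geq 2$ guarantees that $\alpha_2$ and $\alpha_{n-1}$ exist, both terms vanish, and therefore $f \circ g = 0$.

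I expect the main difficulty to be purely organizational rather than conceptual: the mechanism is the same in every case---a would-be \emph{second} relation created at the gluing point collides, through the gentle uniqueness condition G1, with the relation already present inside $\rho$, forcing two arrows to coincide that the $(0,0)_m$ hypothesis keeps apart. The care needed is in matching each junction to the correct instance of G1 (right-composability of the arrow preceding the insertion versus left-composability of the arrow following it) and in keeping track of which end of $\gamma'$ the $(0,0)_m$ condition constrains; it is also worth checking that the hypotheses $n, m > 1$ are exactly what make the relevant neighbouring arrows $\alpha_2, \alpha_{n-1}$ available.
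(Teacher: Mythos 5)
Your proof follows the same mechanism as the paper's: at each gluing junction, the relation forced by membership of the substituted word in $AP_{n+m-1}$ (namely $\alpha_{i-1}\beta_1\in I$ or $\beta_m\alpha_{i+1}\in I$) collides, via the gentle condition G1, with the relation already present inside $\rho$ (namely $\alpha_{i-1}\alpha_i\in I$ or $\alpha_i\alpha_{i+1}\in I$), forcing $\beta_1$ or $\beta_m$ to equal an end arrow of $\gamma'$, which the $(0,0)_m$ hypothesis forbids. The paper's proof does exactly this, writing $\gamma'=\mu\hat\gamma=\hat{\hat\gamma}\nu$ with $\mu,\nu\in Q_1$ and contradicting $\beta_1\neq\mu$, $\beta_m\neq\nu$; your junction analysis in the cases you treat, and your observation that $\gamma'$ has positive length because $(AP_m//Q_0)\subset {^+(0,0)_m^+}$, are both correct.

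There is, however, one false step: the claim that ``$f\circ_i g$ is automatically zero unless $|\gamma'|\in\{1,2\}$.'' This misreads the paper's notation. By the definition of $(X//\P_m)$, the subscript in $\P_2$ means length \emph{at least} $2$, not exactly $2$, so the $\circ_1$/$\circ_n$ formulas (with $\gamma'=\mu\beta=\beta'\mu'$) apply to every $\gamma'$ of length $\geq 2$, and there is no automatic vanishing when $|\gamma'|\geq 3$. As written, your proof leaves those cases uncovered. The repair is immediate, because your length-two argument uses nothing about $\gamma'$ except its first and last arrows: let $c$ be the first and $d$ the last arrow of $\gamma'$ (so $\gamma'=c\mu'=\mu d$); then the $\circ_1$ term still forces $\alpha_1=d$ and $\beta_m\alpha_2\in I$, which together with $d\alpha_2\in I$ and G1 gives $\beta_m=d$, contradicting $\gamma'\notin kQ\beta_m$, and symmetrically for $\circ_n$. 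With that one-line generalization (and noting that only $n>1$, not $m>1$, is what guarantees the existence of $\alpha_2$ and $\alpha_{n-1}$), your proof coincides with the paper's.
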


\begin{proof}
Since  $f \circ g = \sum_{i=1}^n (-1)^{(i-1)(m-1)} f\circ_i g  $, it
suffices  to compute $ f\circ_i g$ for each $i$ with $1 \leq i \leq n$.  Let $g=(\rho, \gamma) = (\beta_1 \cdots \beta_m,  \gamma)$, with  $\gamma= \mu\gamma'= \gamma'' \nu$,  $\mu,  \nu \in Q_1$, $\beta_1 \neq \mu$ and $\beta_m \neq \nu$
and let $f= (\alpha_1 \ldots \alpha_n, \delta) \in ( AP_n//  \P)$. The non vanishing of $f \circ_i g$ would imply that 
 $$\alpha_1\ldots \alpha_{i-1}\beta_1 \ldots \beta_{m}\alpha_{i+1}
\ldots \alpha_n \in AP_{n+m-1} \mbox{ and } \alpha_i = \mu$$
$$\mbox{or}$$  $$\alpha_1\ldots \alpha_{i-1}\beta_1 \ldots
\beta_{m}\alpha_{i+1} \ldots \alpha_n \in AP_{n+m-1} \mbox{ and }
\alpha_i = \nu.$$
In this case
$$\alpha_{i-1}\beta_1 \in I \mbox{ and }
\alpha_{i-1}\mu \in I$$ $$ \mbox{or  }$$ $$ \beta_m\alpha_{i+1} \in
I \mbox{ and }  \nu\alpha_{i+1} \in I.$$
But $\beta_1 \neq \mu$, $\beta_m \neq \nu$ and by hypothesis $A$ is
gentle. This is a contradiction.  
\end{proof}

\begin{proposition}
If  $A = k Q/I$ is a  gentle algebra,  $n, m >1$, and $\G_{n-1} =   \emptyset = \G_{m-1}$
then  $[ \ \HH^n(A) , \HH^m(A)\  ] = 0$.
\end{proposition}

\begin{proof}
Let $\overline{f} = \overline{ f_1} + \overline{f_2} \in \HH^n(A)$, $\overline{g} =  \overline{g_1} + \overline{g_2} \in
\HH^m(A)$, with $f_1 \in \Ker F_{n+1}^0, f_2 \in \Ker F_{n+1}^1, g_1
\in \Ker F_{m+1}^0, g_2 \in \Ker F_{m+1}^1$. We will show that $ [  \overline{ f_i}   , 
 \overline{g_j}  ] =   0$ for any $i,j$.
Since $f_1 \in k(AP_n//Q_0)$ and $g_1 \in  k(AP_m//Q_0)$, it is clear that  ${f_i} \circ  {g_1} = 0 =  {g_i}\circ  {f _1}$ for any $i$. The statement is clear if $\overline f_2 =0 = \overline g_2$. 
If $\overline f_2 \not = 0$,  by Corollary \ref{gentil final} we may assume that $f_2$ belongs to $k ({^-(0,0)_n^-})$.  In this case
Lemma \ref{composicion} implies that  $ {g_i}\circ
{f _2}=0$  for any $i$.  The case $\overline g_2 \not =0$ is analogous, and hence we are done. 
\end{proof}

\begin{corollary}
Let $A = k Q/I$ be a gentle algebra, $\car k \not = 2$. Then $[
\HH^{n}(A), \HH^{m}(A) ] = 0$ for any $n, m  \geq 1$ even natural
numbers.
\end{corollary}

\begin{proof}
From the previous proof and Proposition \ref{gentle cero} we deduce that we only have to consider the case $f_2 \in k (1,1)_{n} \cap k (AP_{n} //Q_1)$ and $g_2 \in k (1,1)_{m} \cap k (AP_{m} //Q_1)$. Since $A$ is gentle we have that $(1,1)_{2s} \cap  (AP_{2s} //Q_1) = (1,1)_{2s}^{\G}$, and the proof of Proposition \ref{cero} shows that $k(1,1)_{2s}^{\G} = \Im G_{2s}^2 \subset \Im F_{2s}$, so $\overline f_2 = 0$ and $\overline g_2 = 0$ in this case. 
\end{proof}

\subsection{Non-vanishing of the cup product and the Lie bracket}

In this section we find conditions on the presentation $(Q,I)$ of a quadratic string algebra in order to get non-trivial structures.

\begin{theorem} \label{cup}
Let $A = kQ/I$ be a quadratic string algebra
and $\G_n \neq \emptyset$ for some  $n > 0$. Then the cup product
defined in $\HH^{*}(A)$ is non-trivial.  More
precisely,
\begin{itemize}
\item [(i)] if $n$ is even and $\car k \not =2$,  $\HH^{s_1n}(A) \cup \HH^{s_2n} (A) \not = 0$;
\item [(ii)] if $n$ is odd and $\car k \not =2$, $\HH^{2s_1n}(A) \cup \HH^{2s_2n} (A) \not = 0$;
\item [(iii)] if $\car k  =2$,  $\HH^{s_1n}(A) \cup \HH^{s_2n} (A) \not = 0$
\end{itemize}
for any $s_1, s_2 \geq 1$.
\end{theorem}

\begin{proof}
By hypothesis there exists  $\omega = ( \alpha_1 \cdots \alpha_n,  e_{s(\alpha_1)}) \in \G_n$.  If this element has order $r$ and $\car k =2$ or, $\car k \not = 2$ and $n$ is even, we have seen in the proof of Lemma \ref{cero} that
$$N (\omega) = N( \alpha_1 \cdots \alpha_n, e_{s(\alpha_1)})  = \sum_{i=0}^{r-1} \tau^i( \alpha_1 \cdots \alpha_n, e_{s(\alpha_1)}) \in \Ker F_{n+1}^0.$$
Moreover, for any $s \geq 1$, the element $\omega^s = ((\alpha_1
\cdots \alpha_n)^s, e_{s(\alpha_1)})$ belongs to $\G_{sn}$, has order $r$ and
$$N(\omega^s) = N((\alpha_1 \cdots \alpha_n)^s, e_{s(\alpha_1)}) = \sum_{i=0}^{r-1}\tau^i((\alpha_1 \cdots \alpha_n)^s, e_{s(\alpha_1)}) \in \Ker F^0_{sn+1}.$$
The element $N(\omega^s)$ is non-zero in $\HH^{sn}(A)$
because   $\Im F_{sn} \subseteq k(AP_{sn} // \P_1)$. For any
$s_1, s_2 \geq 1$ and $0 \leq i, j <r$, we have
$$\tau^i (\omega^{s_1}) \cup \tau^j  (\omega^{s_2}) = \delta_{ij} \tau^i (\omega^{s_1+s_2})$$
where $ \delta_{ij}$ is the Kronecker's delta. 
Then
$$N(\omega^{s_1}) \cup N(\omega^{s_2}) =  N(\omega^{s_1 + s_2})$$
and hence
$$\HH^{s_1n}(A) \cup \HH^{s_2n}(A) \neq (0).$$
If  $n$ is odd and $\car k \not =2$, we consider the elements $\omega^{2s}$ and we get that
$$\HH^{2s_1n}(A) \cup \HH^{2s_2n}(A) \neq (0)$$
for any $s_1, s_2 \geq 1$. 
\end{proof}

\begin{theorem} \label{lie}
Let $\car k =0$ and let $A = kQ/I$ be  a  quadratic string algebra  such that $\G_n
\neq \emptyset$ for some  $n > 0$. Then the Lie bracket defined
in $\HH^{*}(A)$ is non-trivial. More
precisely,
\begin{itemize}
\item [(i)] if $n$ is even,  $[\HH^{s_1n+1}(A) , \HH^{s_2n+1} (A)] \not = 0$;
\item [(ii)] if $n$ is odd,  $[\HH^{2s_1n+1}(A) , \HH^{2s_2n+1} (A)] \not = 0$
\end{itemize}
for any $s_1, s_2 \geq 1$, $s_1 \not = s_2$.
\end{theorem}

\begin{proof}
By hypothesis there exists  $\omega = ( \alpha_1 \cdots \alpha_n, e_{s(\alpha_1)})
\in \G_n$ and suppose  that  it has order $r$. From  Lemma
\ref{nucleo} we have  that
$$\psi (\omega)  =  (\alpha_1 \ldots \alpha_n\alpha_1, \alpha_1) \in \Ker
F^1_{n+2} $$
because $(\alpha_1 \ldots \alpha_n\alpha_1, \alpha_1)\in
(1,1)_{n+1}.$ 
Moreover, for any $s \geq 1$, the element 
$$\omega^s = ((\alpha_1
\cdots \alpha_n)^s, e_{s(\alpha_1)}) \in \G_{sn}$$ also has order $r$ and
$$\psi (\omega^s) = ((\alpha_1
\cdots \alpha_n)^s\alpha_1, \alpha_1) \in \Ker F^1_{sn+2} .$$
If $n$ is even, the element $\psi (\omega^s) $ is non zero in $\HH^{sn+1}(A)$
because we have seen in the proof of Lemma \ref{cero} that
$\psi (\omega^s)  \in \Im F_{sn+1}$ if and only if $w^s \in
\Im (1-\tau)$. But $\Im(1-\tau) = \Ker N$ and $w^s \not \in \Ker N$.
For any $s_1, s_2 \geq 1$, we have  that 
$$ \psi (\omega^{s_1}) \circ_i \psi (\omega^{s_2}) = \left\{ \begin{array}{ll}
\psi (\omega^{s_1+ s_2}), & \hbox{if $i = lr + 1 $, for $l = 0, \cdots , s_1\frac{n}{r}$;} \\
0, & \hbox{otherwise.} \end{array} \right.$$
Then 
 \begin{align*}  
\psi (\omega^{s_1}) \circ  \psi (\omega^{s_2}) &  =  
\sum_{i=1}^{s_1n+1} (-1)^{(i-1)(s_2n)}  \psi (\omega^{s_1}) \circ_i  \psi (\omega^{s_2})
= \sum_{l=0}^{s_1\frac{n}{r}} (-1)^{(lr)(s_2n)}  \psi (\omega^{s_1+ s_2}) \\
& =  (s_1\frac{n}{r} + 1) \psi (\omega^{s_1 + s_2 })\end{align*}
and hence
\begin{align*}  [  \psi (\omega^{s_1}) ,  \psi (\omega^{s_2}) ]  & =    \psi (\omega^{s_1}) \circ  \psi (\omega^{s_2}) - (-1)^{(s_1n)(s_2n)}   \psi (\omega^{s_2}) \circ  \psi (\omega^{s_1})  \\
& =    (s_1\frac{n}{r} + 1)  \psi (\omega^{s_1 + s_2})-  (s_2\frac{n}{r} +
1)  \psi (\omega^{s_1+ s_2 }) \\
& =   \frac{n}{r}(s_1 - s_2)  \psi (\omega^{s_1 + s_2}).
\end{align*}
So, $$[\  \HH^{ns_1 + 1 }(A) ,  \HH^{ns_2 + 1}(A) \ ] \neq 0 \mbox{
if } s_1 \neq s_2. $$ 
If  $n$ is odd, we consider the element
$$ \psi (\omega^{2s}) = ( (\alpha_1 \ldots \alpha_n)^{2s}\alpha_1, \alpha_1) \in \Ker
F_{2sn+2}$$
and we get that
$$[ \ \HH^{2ns_1 +1 }(A) , \HH^{2ns_2 + 1 }(A) \ ] \neq 0,
 \mbox { if } s_1 \neq s_2.$$ 
\end{proof}

\begin{remark}
The previous theorem also holds in any characteristic if we add some hypothesis on the number $\frac{n}{r}$.
\end{remark}

\begin{example}   For the algebra in  Example \ref{ejemplo2} we have that $\G_n  \neq  \emptyset$  hence our Theorems \ref{lie} and \ref{cup} say that its Gerstenhaber algebra structure is  non-trivial. More precisely, if  we consider the following sets of generators:  
\begin{itemize}
\item $\{ u_1 = (\beta_1, \beta_1), v_1= (\alpha_1, \alpha_1), w_1= (\alpha_7, \alpha_7) \}$ for $\HH^{1}(A),$
\item $\{ u_7 = ((\beta_1\beta_2)^{3}\beta_1, \beta_1), v_7= (\alpha_1\dots\alpha_7, e_1) \}$ for $\HH^{7}(A),$
\item $ \{( u_{2n+1} = ((\beta_1\beta_2)^{n}\beta_1, \beta_1) \}$ for $\HH^{2n+1}(A)$ when $n>0$, $n \neq 3$, and
\item  $\{ u_{2n}= N(  ((\beta_1\beta_2)^{n}, e_6)  ) \}$ for $\HH^{2n}(A)$ when $n \geq 1$, 

\end{itemize}
then 
\begin{align*}
 u_{2i} \cup  u_{2j} & =  u_{2(i+j)} \\
 u_{2i} \cup  u_{2j+1} & =    u_{2j+1} \cup  u_{2i} \ = \ u_{2(i+j)+1}  \\
 {[}  u_{2i+1} , u_{2j+1}  {]}   & =     (i-j) \ u_{2(i+j)+1} \\
 {[}  u_{2i} , u_{2j+1}  { ]} & =   - [ \  u_{2j+1} , u_{2i}  \  ] \ = \ i \ u_{2(i+j)} \\
{[}  v_{7} , v_{1}  {]}   & =    - {[}  v_{1} , v_{7}  {]} = v_{7}  \\
 {[}  v_{7} , w_{1}  {]}   & =    - {[}  w_{1} , v_{7}  {]} = v_{7}  \
\end{align*}
and all the other cases are zero.  
\end{example}

 \begin{example}   Let $\car k \not =2$  and let $A=kQ/I$ be the radical square zero algebra whose quiver $Q$ consists of  an oriented cycle of length $m >1$
\[ \xymatrix{
&  & \ar[rr]_{\alpha_1}  & &    \ar[rd]_{\alpha_2} & & \\  & \ar[ru]_{\alpha_{m}}  &  & & & \ar@{.}[d] &  \\ &  \ar@{.}[u]  & & & & \ar[ld]_{\alpha_{i-1}}    &  \\   &  & \ar[lu]_{\alpha_{i+1}} &  & \ar[ll]_{\alpha_i} & & 
} \] 
A straightforward computation shows that
$$   \HH^{qm}(A) =k, \quad  \HH^{qm+1}(A) =k$$
when $qm$ is even and it is zero in all the other cases.  More precisely,  
$$u_{qm}=N(( \alpha_1, \dots, \alpha_m)^q, e_{s(\alpha_1)}) \quad \mbox{and} \quad  u_{qm+1}=(( \alpha_1\dots, \alpha_m)^q\alpha_1, \alpha_1)$$
are   generators of $\HH^{qm} (A)$ and 
$\HH^{qm+1} (A)$ respectively. 
From Theorems \ref{cup}  and \ref{lie} we know that its Gerstenhaber algebra structure is non-trivial and the bracket on the generators is given by 
\begin{align*}  [ u_{q_1m+1}, u_{q_{2m+1}}]   &  =    (q_1-q_2) \ u_{(q_1+q_2)m+1} \\
{[}  u_{ q_1m }, u_{ q_{ 2m+1 } } {]} &   =     -{[}   u_{ q_{ 2m+1 } }, u_{ q_1m }  {]} = q_1 \ u_{ (q_1+q_2)m  }
\end{align*}
If we only consider odd degrees, that is, $$ \HH^{odd}(A) = \bigoplus_{\substack{q \geq0, \\  \ qm \mbox{ \small even }} } \HH^{qm+ 1}(A),$$   then the Gerstenhaber bracket endows $ \HH^{odd}(A)$ with a Lie algebra structure such that it is isomorphic to the infinite dimensional Witt algebra.  In this way we recover the results in \cite[Proposition 4.3.4]{SF1}. 

\end{example}

\end{document}